\newtheorem{theorem}{Theorem}
\DeclareMathOperator{\lcm}{lcm}
\title{Two-Disk Compound Symmetry Groups}
\author{Robert A. Hearn\textsuperscript{1}, William Kretschmer\textsuperscript{2}, \\
Tomas Rokicki\textsuperscript{3}, Benjamin Streeter\textsuperscript{4}, and Eric Vergo\textsuperscript{5} 
\vspace{10pt}\\
\textsuperscript{1}Gathering4Gardner; bob@hearn.to \textsuperscript{2}UT Austin; kretsch@cs.utexas.edu\\
\textsuperscript{3}Radical Eye Software; rokicki@gmail.com \textsuperscript{4}benpuzzles@gmail.com \textsuperscript{5}ericvergo@gmail.com}% end \author
\date{}					% Suppress any date on submissions
\begin{document}

\maketitle

% Prevent page number 1 from being printed on the first page.
\thispagestyle{empty}

\begin{abstract}

Symmetry is at the heart of much of mathematics, physics, and art. Traditional geometric symmetry groups are defined in terms of isometries of the ambient space of a shape or pattern. If we slightly generalize this notion to allow the isometries to operate on overlapping but non-identical metric spaces, we obtain what we call \emph{compound symmetry groups}. A natural example is that of the groups generated by discrete rotations of overlapping disks in the plane. Investigation of these groups reveals a new family of fractals, as well as a rich structure that is intriguing both mathematically and artistically. We report on our initial investigations.
\end{abstract}

%%%%%%%%%%%%%%%%%%%%%%%%%%%%%%%%%%%%%%%%%%%
\section*{Introduction}

Symmetry is of fundamental importance in many disciplines of mathematics, from the fields of Galois theory, to the automorphisms of abstract algebra, to the isometries of wallpaper patterns and crystals. In physics, Noether's theorem connects the symmetries of space and time with conservation laws. In art, subtleties of degrees and kinds of symmetry arguably lie at the heart of what constitutes beauty.

Here we expand on the traditional notion of geometric symmetry, with mathematical and artistic consequences, at least. The \emph{symmetry group} of a shape or pattern is defined as the group of all isometries of the ambient space that preserve that shape or pattern. Isometries may only be combined in so many ways in any given metric space: there are (up to isomorphism) only 17 wallpaper groups, 7 frieze groups, 230 3-dimensional space groups, etc. We cannot have, for example, five-fold rotational symmetry in any repeating pattern in the plane---though quasicrystals can approximate this \cite{Senechal1996}.

\begin{figure}[h!tbp]
\centering
\begin{minipage}[c]{0.49\textwidth} 
%\begin{figure}[h!tbp]
%	\centering
	\includegraphics[width=\textwidth]{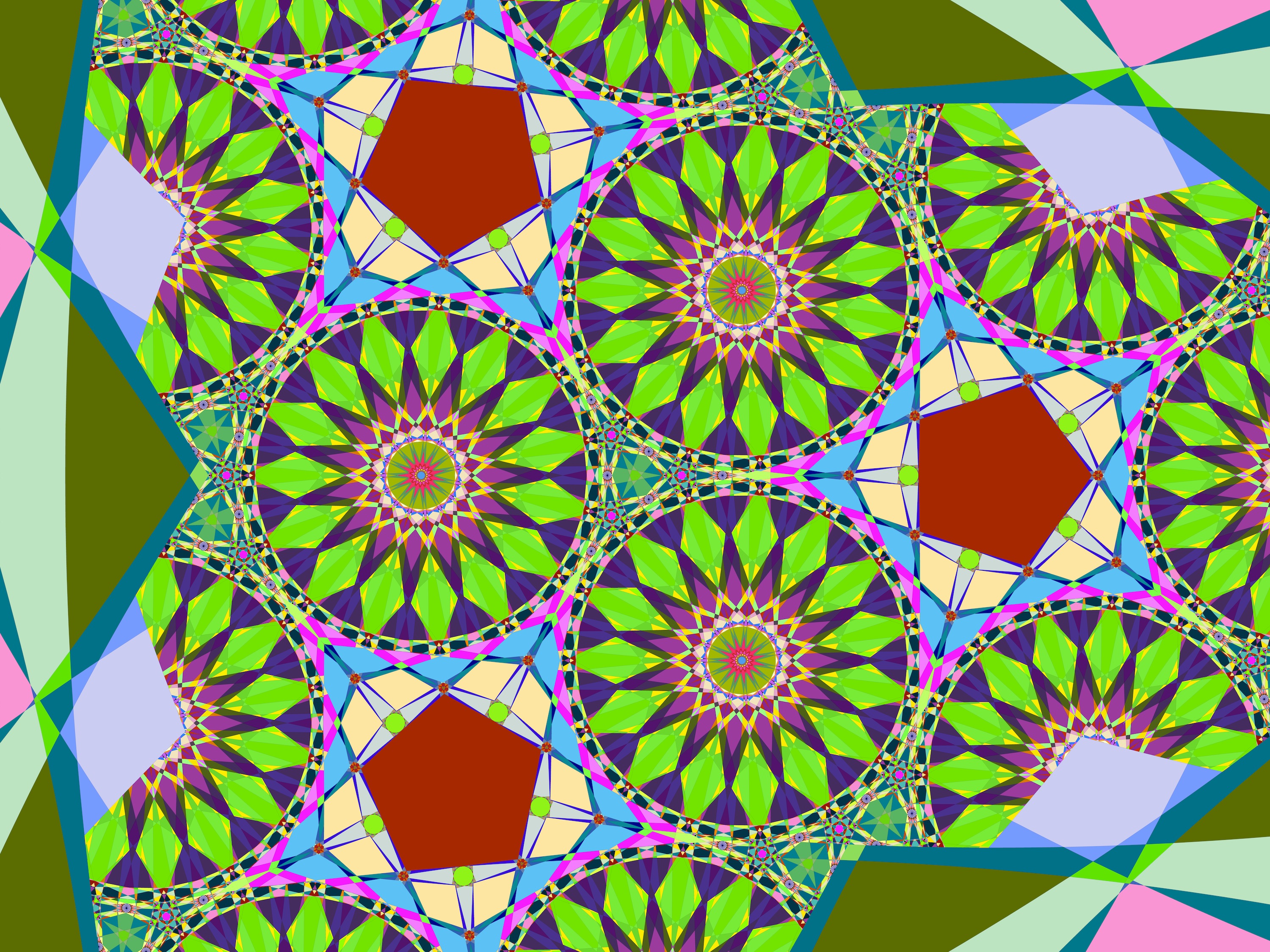}
	\subcaption{}
	\label{fig:3-5}
\end{minipage}
\hfill
\begin{minipage}[c]{0.49\textwidth} 
	\includegraphics[width=\textwidth]{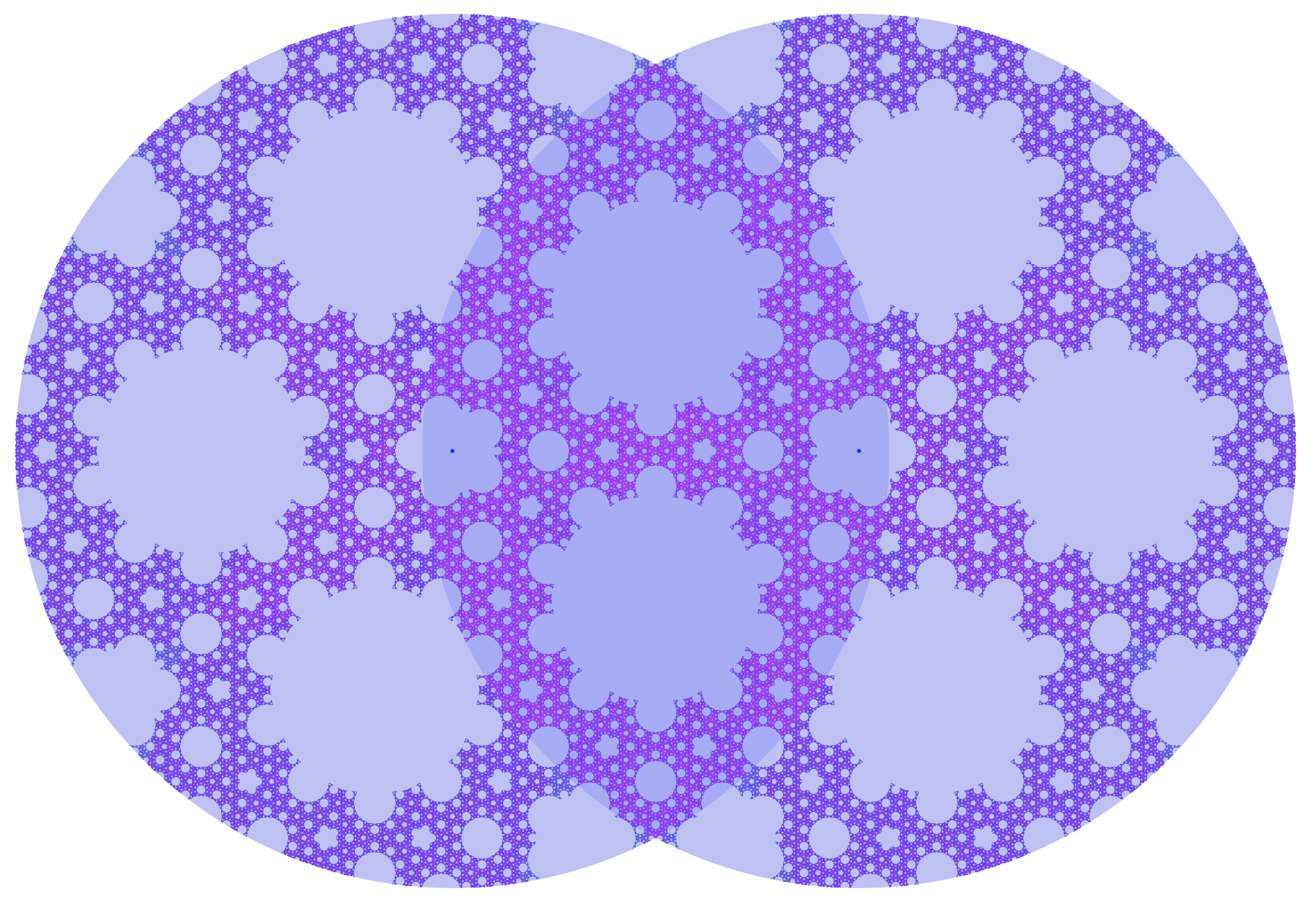}
	\subcaption{}
	\label{fig:n=5-fractal}
\end{minipage}
\caption{Images of compound symmetry groups:  (a) $GG_{3,5}(2.42,2.41)$, (b) the $n=5$ fractal.}
\end{figure}

By considering groups generated by isometries of metric subspaces that are not identical, but instead overlap, we can probe some of these ``forbidden'' symmetries in new ways. A portion of one of these ``compound symmetry groups'' is shown in Figure~\ref{fig:3-5}. Locally, this image contains regions of three-fold and five-fold symmetry---necessarily broken on larger scales---as well as combinations such as 15-fold.  A new family of fractals lies embedded within these groups at critical parameter values, as shown in Figure~\ref{fig:n=5-fractal}.

In this paper we explore the characteristics of these new kinds of symmetry group. Of particular importance will be determining the parameter values at which the groups become infinite, exploring the underlying dynamics, and also understanding the characteristic fractals, or sometimes pseudofractals, that appear precisely at these transitions.

\begin{figure}[h!tbp]
	\centering
	\includegraphics[width=.4\textwidth]{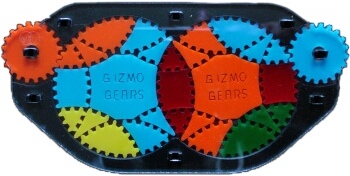}
    \caption{The Gizmo Gears puzzle.}
	\label{fig:gizmo}
\end{figure}

This work began with the study of the behavior of certain ``circle puzzles''~\cite{Engel1986, Engel2019}---especially \emph{Gizmo Gears}, shown in Figure~\ref{fig:gizmo}, designed by Doug Engel.\footnote{The specific question was this: can Gizmo Gears be finitely ``unbandaged'', or chopped into pieces so that all natural turns ($30^{\circ}$ in this case) are unblocked, regardless of configuration? If not, a puzzle is said to ``jumble''. Against all intuition at the time, Gizmo Gears does in fact jumble---it is past the critical radius (barely) for the compound symmetry group family $GG_{12}$. The original discussion can be found here: \url{https://twistypuzzles.com/forum/viewtopic.php?t=25752}.} A two-disk compound symmetry group is the mathematical generalization of a circle puzzle. The study of the group structure of circle puzzles seems to have been initiated in \cite{Kretschmer2017}.

\section*{Definitions and Basic Properties of Two-Disk Systems}

%A \emph{compound symmetry group} is a group generated by a set of isometries of overlapping metric spaces.
A \emph{compound symmetry group} is a group generated by a set of isometries of subspaces of a metric space.

Here we will primarily be concerned with compound symmetry groups generated by discrete rotations of two overlapping closed disks in the Euclidean plane. We sometimes call these \emph{two-disk systems}. Without loss of generality, let the two disks be centered at $(-1,0)$ and $(1, 0)$. Denote the left disk's radius as $r_1$, and the right disk's as $r_2$. The generators ${a, b}$ are rotation of the left disk by $-2\pi / n_1$, and of the right disk by $-2\pi / n_2$. The group operation is function composition on the left\footnote{We choose this convention so that move sequences can be read left to right. Similarly, the generators are defined to be clockwise rotations for compatibility with normal twisty puzzle notation.}: $ab(x)=b(a(x))$. We denote the group with these properties\footnote{$GG$ was chosen in honor of Gizmo Gears, and also, conveniently, to indicate the interaction of two groups.} as $GG_{n_1,n_2}(r_1,r_2)$. If $n_1=n_2$ we use a single subscript, and similarly for $r_1$ and $r_2$. We can also omit the radius specification to indicate a family of groups with unspecified but equal radii. For example, one very important family is $GG_5$---the groups generated by five-fold rotation of two equal disks.

To build some intuition about how two-disk systems work, consider Figure~\ref{fig:n=5-cases}. This figure shows the action of the group elements on points in the plane, for $GG_5$ at various $r$. Regions that remain connected under all elements (\emph{pieces}) are colored identically; the color is a function of the size of the orbit. In Figure~\ref{fig:n=5-1}, $r<1$, and the two rotations do not interact---the group is isomorphic to $C_5 \times C_5$. In Figure~\ref{fig:n=5-2}, the disks overlap, so there is some interaction. Viewed as a circle puzzle, we have added 9 pieces to the puzzle. This group is isomorphic to $C_5 \times C_5 \times A_9$ (we have added the even permutations of the wedge pieces). In Figure~\ref{fig:n=5-3}, the overlap has increased, and many more small pieces are created. Observe that in all cases, we do have five-fold rotational symmetry about two different points---but only within a fixed radius. 

\begin{figure}[h!tbp]
\centering
\begin{minipage}[c]{0.33\textwidth} 
	\includegraphics[width=\textwidth]{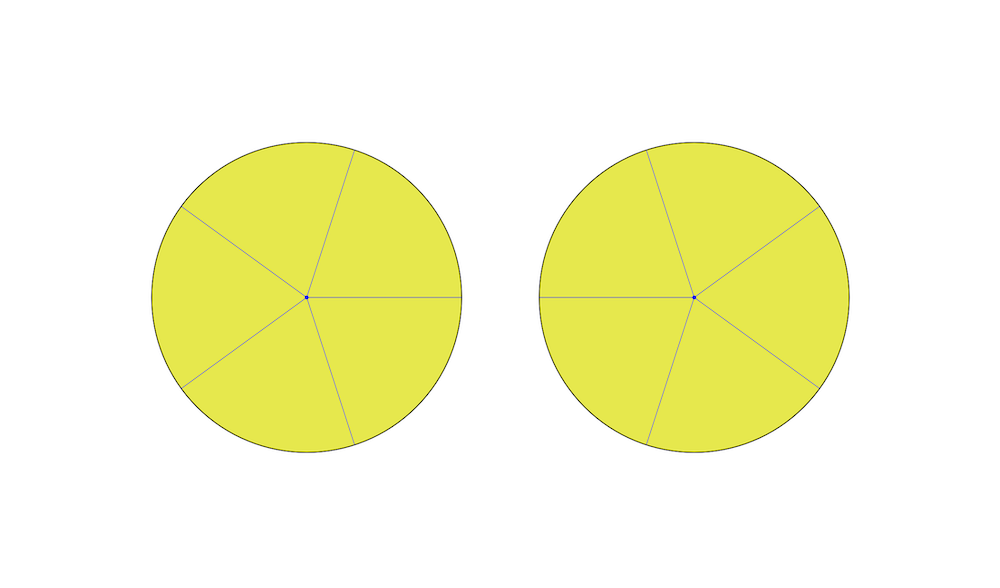}
	\subcaption{}
	\label{fig:n=5-1}
\end{minipage}
\hfill
\begin{minipage}[c]{0.33\textwidth} 
	\includegraphics[width=\textwidth]{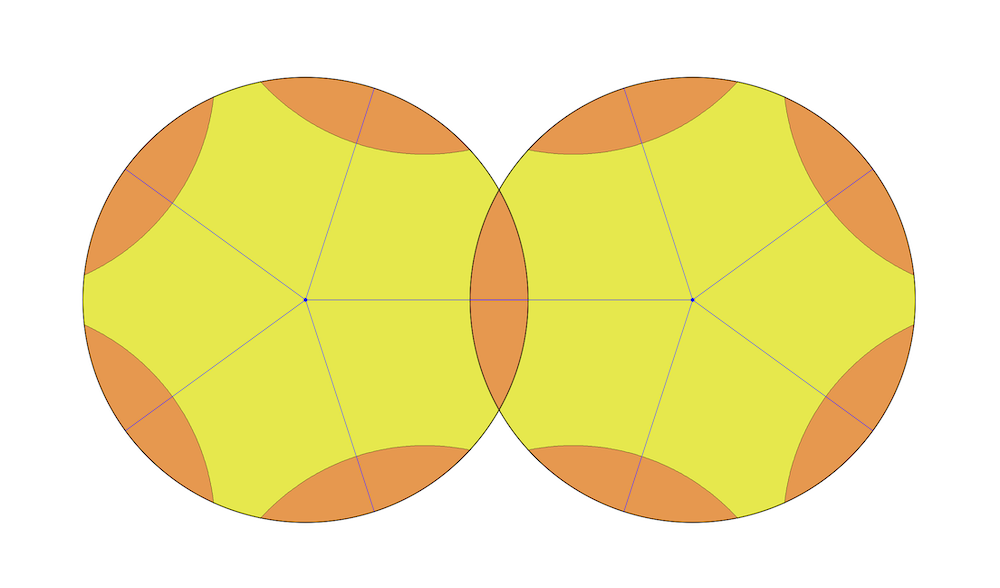}
	\subcaption{}
	\label{fig:n=5-2}
\end{minipage}
\hfill
\begin{minipage}[c]{0.33\textwidth} 
	\includegraphics[width=\textwidth]{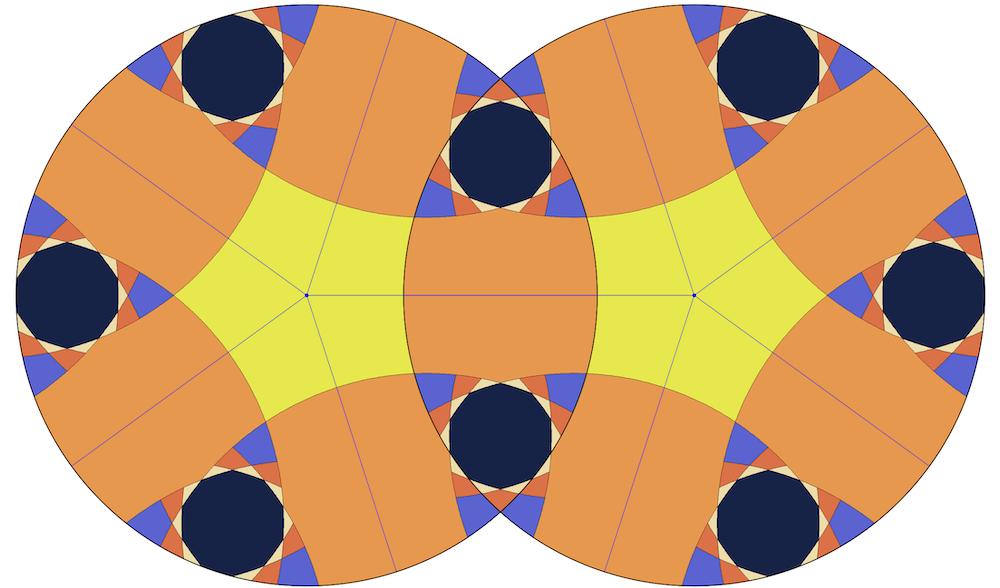}
	\subcaption{}
	\label{fig:n=5-3}
\end{minipage}
\caption{Different cases of $GG_5$. (a) is isomorphic to $C_5 \times C_5$; (b) is isomorphic to $C_5 \times C_5 \times A_9$; \\
(c) is more complicated.}
\label{fig:n=5-cases}
\end{figure}

It is important to note that while regular symmetry groups are generated by and consist of isometries, compound symmetry groups are likewise generated by isometries, but the general group element is \emph{not} an isometry: If we perform $ab$, different regions have been rotated by different amounts about different centers. This is called a \emph{piecewise isometry} \cite{Goetz2003}.

\subsection*{Infinite Groups}

A key question about any two-disk group is whether it is finite or infinite. If a family $GG$ has some infinite member, it will have a \emph{critical radius}, $r_c(GG)$, such that $GG(r)$ is finite when $r<r_c(GG)$, and infinite when $r>r_c(GG)$.\footnote{We believe, but have not proved, that it will also be infinite exactly at the critical radius.} We know this because the size of the orbit of any given point cannot decrease as $r$ increases---all group elements that affect it are still available---so $GG$ can never go from infinite to finite as $r$ increases.
We can also speak of the critical radius when $r_1\neq r_2$ if we fix one radius. 

We can precisely characterize which $GG_{n_1,n_2}$ have infinite members:

\begin{theorem}
\label{thm:not_2346_group_infinite}
There exists some $r$ for which $GG_{n_1,n_2}(r)$ is infinite if and only if $\lcm(n_1, n_2) \not\in \{2, 3, 4, 6\}$.\footnote{This fact is closely related to the crystallographic restriction theorem.}
\end{theorem}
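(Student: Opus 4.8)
The plan is to treat the two directions separately. The "only if" direction is the crystallographic-restriction side: I would show that if $\lcm(n_1,n_2)\in\{2,3,4,6\}$, then $GG_{n_1,n_2}(r)$ is finite for every $r$. The key observation is that when $\lcm(n_1,n_2)=m$, both generators $a$ and $b$ are rotations by multiples of $2\pi/m$, so every group element, restricted to any piece, is a rotation by a multiple of $2\pi/m$ composed with some translation — i.e. the linear part of every piecewise-isometry branch lies in the cyclic group $C_m$. For $m\in\{2,3,4,6\}$, the rotations by $2\pi/m$ together with the two centers $(\pm1,0)$ generate a discrete group of isometries of the plane (a wallpaper/rosette group, by the crystallographic restriction), so the orbit of any point is discrete; since it is also bounded (everything stays within a fixed radius of the disk centers), it is finite. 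One must be slightly careful that the piecewise nature doesn't break discreteness — but since the translational parts of all branches lie in the lattice generated by the finitely many translations arising from $a^i$ and $b^j$ conjugated by elements of $C_m$, the whole orbit sits inside a single discrete set, hence is finite. I expect this direction to be the main obstacle, precisely because making the "all reachable images of a point lie in one fixed lattice" claim rigorous requires tracking how the piecewise structure interacts with composition.

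For the "if" direction I must exhibit, for each pair with $\lcm(n_1,n_2)=m\notin\{2,3,4,6\}$, some radius $r$ making $GG_{n_1,n_2}(r)$ infinite. It suffices to handle the case $n_1=n_2=m$ for such $m$ (since $GG_{m,m}$ embeds in $GG_{n_1,n_2}$ whenever $m=\lcm(n_1,n_2)$ divides... — actually one should argue more carefully: $a^{m/n_1}$ and $b^{m/n_2}$ are rotations by $2\pi/m$, so $GG_{m,m}(r)$ is a subgroup of $GG_{n_1,n_2}(r)$, and infiniteness of a subgroup forces infiniteness of the whole group). So reduce to showing $GG_m(r)$ is infinite for some $r$ when $m\in\{5\}\cup\{7,8,9,\dots\}$. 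For $m\geq 5$, the product $ab$ of the two rotations by $2\pi/m$ about $(\mp1,0)$ acts on a suitably chosen piece as a rotation by $4\pi/m$ about some center $c$; for $r$ large enough that this piece stays inside both disks, iterating gives a rotation of infinite order unless $4\pi/m$ is a rational multiple of... it is rational, so that alone doesn't work — instead I would use the standard trick: compose a small-radius rotation that is an irrational rotation is impossible here since all angles are rational, so the infiniteness must come from the piecewise/translational part. The cleaner approach: pick a point $p$ near the disk boundary and show its orbit under the monoid generated by $a,b$ is infinite by finding two branches whose translational parts generate a non-discrete (dense) subgroup of translations — this happens as soon as the geometry forces an angle $2\pi/m$ with $m\geq5$, by the crystallographic restriction theorem run in reverse (a rotation of order $5$ or $\geq 7$ together with a nontrivial translation generates a dense subgroup of isometries).

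Concretely, the cleanest route is: choose $r$ so that some small "overlap piece" is carried by a short word $w$ in $a,b$ back to (a superset of) itself with the linear part of $w$ equal to a rotation by $2\pi/m$; then $w$ restricted to that piece is an isometry of order dividing the order of that rotation only if the translational part is compatible, and otherwise $\langle w\rangle$ has infinite order. Since for $m\notin\{2,3,4,6\}$ the lattice condition of the crystallographic restriction fails, we can arrange $r$ so that the translational part is nonzero, forcing infinite order. The delicate point is the existence of such a word $w$ and piece — I would establish it by a direct geometric construction (as suggested by Figure~\ref{fig:n=5-cases}(c), where for $GG_5$ the proliferation of small pieces already signals this) and defer the sharp determination of the critical radius to later sections. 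Thus the main obstacle is the careful bookkeeping of piecewise-isometry branches and their linear/translational parts; once that is in hand, both directions follow from the crystallographic restriction theorem applied in its two natural directions.
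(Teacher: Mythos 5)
There is a genuine gap, and it sits exactly where the paper's proof does its work. Your central mechanism for the ``if'' direction is a word $w$ whose linear part is a rotation by $2\pi/m$ and whose translational part is ``incompatible,'' which you claim forces $\langle w\rangle$ to have infinite order. That is false for planar isometries: any orientation-preserving isometry whose rotational part is a nontrivial rotation by $2\pi/m$ is itself a rotation by $2\pi/m$ about some (shifted) fixed point, hence has order exactly $m$ regardless of its translational part. Unboundedness can only come from words with \emph{trivial} rotational part and nonzero translation, or from genuinely piecewise behavior, and this is what the paper exploits: $a^{-1}b$ acts as a pure translation on every point moved by both rotations; the words $a^{-k}b^{k}$ realize the edge-translations of a regular $n$-gon of circumradius $2$; and composing/differencing these produces a strictly smaller $n$-gon of available translations (a pentagram variant when $n=5$), hence arbitrarily small translations and an infinite orbit of the origin. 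The other essential ingredient, which you explicitly defer (``I would establish it by a direct geometric construction''), is the verification that all of this happens at a \emph{bounded} radius with every intermediate image remaining inside both disks (the paper checks that $r\geq 4$ suffices, $r\geq 8$ when $n_1\neq n_2$); without that check, your appeal to ``the crystallographic restriction run in reverse'' only concerns the group generated by full-plane rotations, not the piecewise two-disk system, so the heart of the theorem is still missing.

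Your reduction to the equal-$n$ case is also incorrect as stated: $a^{m/n_1}$ is a rotation by $(m/n_1)\cdot 2\pi/n_1$, not by $2\pi/m$, and no power of a rotation of order $n_1$ can have order $m>n_1$ (try $n_1=3$, $n_2=5$: no power of $a$ is a $15$-fold rotation). The paper instead manufactures the needed $\lcm(n_1,n_2)$-fold rotations about two distinct centers as $(a^{-1}b)^{\alpha}$ and $(ba^{-1})^{\alpha}$ and reruns the equal-$n$ argument with these as generators. Finally, on the ``only if'' direction (which the paper omits): your sketch shows at most that every point has a finite orbit, since orbits lie in the orbit of the discrete group generated by the two full-plane rotations when $\lcm(n_1,n_2)\in\{2,3,4,6\}$ and are bounded; concluding that the group $GG_{n_1,n_2}(r)$ itself is finite requires a further step (e.g., finitely many pieces and finitely many possible isometries per piece), which should be stated rather than folded into discreteness of point orbits.
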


\begin{proof}

We omit the ``only if'' proof in this paper, and prove the more interesting direction.

First, assume that $n_1=n_2=n$.
Observe that $a^{-1}b$ is a translation of all points moved by both rotations: The point at for example $(-1, 0)$ is moved (if $r\geq 2$), but the net rotation is 0. In particular, $a^{-1}b$ represents one side of a regular $n$-gon of circumradius 2, as shown in Figure~\ref{fig:CRT-shrinking}. $a^{-2}b^2$ is another translation, and so on. We can generate translations from any one vertex of this $n$-gon to any other, as long as $r$ allows the point to remain within both disks, by composing these sequences and their inverses.

\begin{figure}[h!tbp]
	\centering
	\includegraphics[width=.8\textwidth]{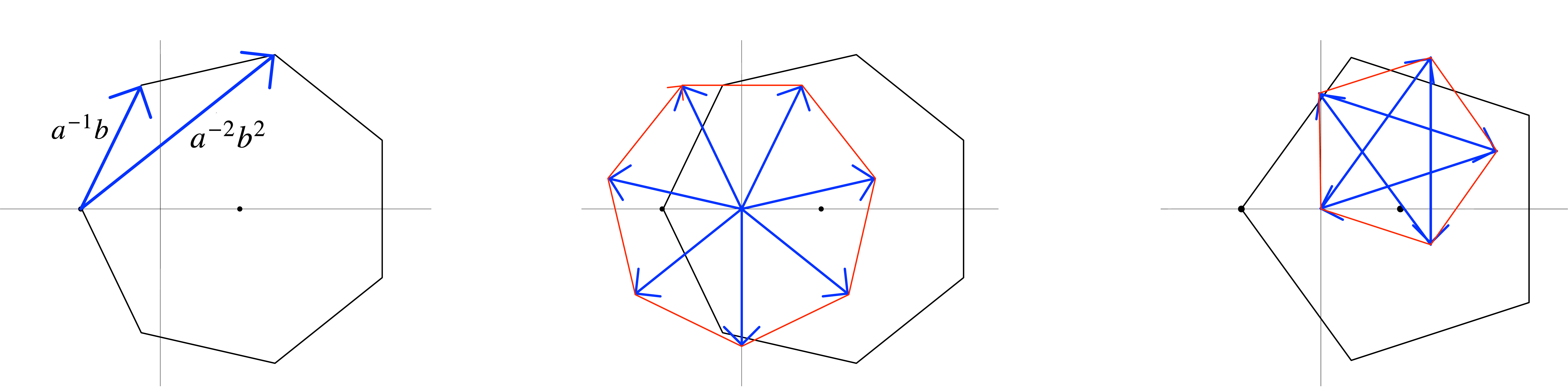}
    \caption{Constructible translations can shrink arbitrarily.}
	\label{fig:CRT-shrinking}
\end{figure}

There are two cases. If $n>5$, we take all the translations from one polygon vertex to an adjacent one, and observe the images we get of the origin under all of them. The resulting $n$ points form another $n$-gon, but smaller than the original. But again, we can form translations between any of these new points by taking differences of the appropriate translations, and then repeat the process, resulting in a yet smaller polygon. Thus, we can generate arbitrarily small translations, and the group must be infinite.

If $n=5$, we start at the origin and apply every other pentagon edge translation, resulting in a pentagram shape whose vertices again form a pentagon smaller than the original. Again, we can iterate.

We can do this with bounded $r$---inspection shows that $r\geq 4$ is sufficient; no moves described ever move a relevant point more than a distance of 4 from either disk center.

If $n_1\neq n_2$, then it is easy to show that $(a^{-1}b)^\alpha$ and $(ba^{-1})^\alpha$, for some $\alpha$, are rotations by $2\pi/\lcm(n_1, n_2)$ about two different centers. We can use these rotations in place of $a$ and $b$ in the proof above (with $r\geq 8$).
\end{proof}

\section*{Geometric Constructions}
For some $n$, we have geometric constructions showing that $GG_n$ is infinite at a value of $r$ matching our numerical estimates for critical radius. 
For other $n$ we have plausible geometric constructions which agree well with our numerical estimates. But for most $n$, all we have is our numerical estimates.

The simplest case is $n=5$. Figure~\ref{fig:n=5} shows the relevant geometry. Using  the shown relationships, simple trigonometry yields
$r=\sqrt{3 + \varphi}\approx 2.149,$
where $\varphi$ is the golden ratio. A similar analysis of the geometry in Figure~\ref{fig:n=10}, where $n=10$, gives $r=\sqrt{4 - \varphi}\approx 1.543$.

The dynamical processes behind the transition from finite to infinite at the critical radius remain mysterious in most cases. However, for $n=5$, we can see what is going on. (We omit due to space a proof that the single generator $ab^{-1}$ produces the same behavior.)

\begin{theorem}
$GG_5$ is infinite at $r=\sqrt{3 + \varphi}$.
\end{theorem}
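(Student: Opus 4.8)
The plan is to turn the pentagram construction from the proof of Theorem~\ref{thm:not_2346_group_infinite} into an exact renormalization and to identify $r=\sqrt{3+\varphi}$ as precisely the radius at which that renormalization first becomes realizable. Recall that $a^{-1}b$ acts as a pure translation $t$ on the lens-shaped set of points that $a^{-1}$ moves and whose $a^{-1}$-image $b$ then moves; geometrically $t$ is one edge of the regular pentagon $P_0$ of circumradius $2$ centered at the origin, and conjugating $a^{-1}b$ by powers of $a$ and $b$ realizes the other four edges of $P_0$ as translations of $GG_5$, each on its own lens-shaped domain. Composing these along the ``every other edge'' walk that appears in the proof of Theorem~\ref{thm:not_2346_group_infinite} produces translations of $GG_5$ carrying any vertex of $P_0$ to any other; the five points reached from the origin form a regular pentagon $P_1$, a copy of $P_0$ rotated and scaled by $1/\varphi$ (the elementary pentagram/golden-ratio identity), so $P_1=\sigma(P_0)$ for an explicit contracting similarity $\sigma$ of ratio $\varphi^{-1}$.

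The first real step is the domain bookkeeping. Each letter of a word acts as a rigid motion only on the sub-disk it rotates and as the identity elsewhere, so a word equals its ``intended'' piecewise translation only on the set of points whose entire forward trajectory, including the rotational pre-images forced by the intermediate letters, stays inside the appropriate closed disk at every step. I would write out the words realizing the five edge translations of $P_0$ and their relevant differences, track the worst-case distance of any such intermediate point from the centers $(\pm 1,0)$---this is exactly the trigonometry indicated in Figure~\ref{fig:n=5}---and show that the single binding inequality is $r^2\ge 3+\varphi$, with equality corresponding to a forced pre-image lying exactly on a disk boundary (admissible, since the disks are closed). This establishes that at $r=\sqrt{3+\varphi}$ the edge translations of $P_0$, and hence those of $P_1$ (which are differences of the reached points), genuinely belong to $GG_5$.

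The second step is to iterate. The construction at $P_1$ is the similar image under $\sigma$ of the construction at $P_0$, so every distance it forces is $\varphi^{-1}$ times a distance forced at scale $0$; the key monotonicity is that \emph{finer copies need strictly less room}, so the inequality $r^2\ge 3+\varphi$ proved at scale $0$ already suffices at every scale, provided one also checks that $\sigma$ keeps the active region inside the lens (which it does, its fixed point lying in the interior of the overlap). Iterating $\sigma$ yields a nested family of pentagons $P_0\supsetneq P_1\supsetneq P_2\supsetneq\cdots$ at scales $\varphi^{-k}$, and inside a neighborhood of each $P_k$ the action of $GG_5$ contains a faithful $\varphi^{-k}$-rescaled copy of its action near $P_0$, which already has several distinct pieces. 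Hence $GG_5$ at $r=\sqrt{3+\varphi}$ has pieces of arbitrarily small diameter, so infinitely many pieces; since a group of piecewise isometries built from a finite generating partition has only finitely many pieces when the group itself is finite, $GG_5$ must be infinite. This nested self-similar structure is precisely the $n=5$ fractal of Figure~\ref{fig:n=5-fractal}.

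The main obstacle is the second paragraph: honestly accounting for \emph{every} rotational pre-image demanded by the piecewise-isometry composition---not merely the forward orbit of the point being translated---and showing that the worst case over the whole word is governed by one clean inequality evaluating to $r^2=3+\varphi$; a careless count either over- or under-estimates the critical radius. A secondary subtlety is upgrading ``finer copies need less room'' to a genuine statement, i.e.\ exhibiting the renormalization as an exact conjugacy of the relevant part of the action (so the iteration never stalls and introduces no new constraint) rather than merely an approximate one; this exactness is what lets us pin the critical radius of $GG_5$ to the sharp value $\sqrt{3+\varphi}$ instead of the crude bound $r\ge 4$ from Theorem~\ref{thm:not_2346_group_infinite}.
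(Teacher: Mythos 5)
Your proposal has a genuine gap at its central quantitative claim. You assert that the pentagon-edge translations of Theorem~\ref{thm:not_2346_group_infinite} (sides of a regular pentagon of circumradius $2$, realized by words such as $a^{-1}b$ and its conjugates) become realizable, together with their iterated $\varphi^{-1}$-rescalings, precisely when $r^2\ge 3+\varphi$, but you never carry out the bookkeeping, and the claim is almost certainly false as stated. Each such edge translation has length $4\sin(\pi/5)\approx 2.35$, and at $r=\sqrt{3+\varphi}\approx 2.149$ the image of the origin under $a^{-1}b$ already lies at distance $\approx 3.05$ from the left center, far outside the left disk; chaining these translations around the pentagon and pentagram, as the renormalization requires, forces intermediate points well outside both disks. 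The paper itself notes that the Theorem~\ref{thm:not_2346_group_infinite} construction needs roughly $r\ge 4$, so ``the single binding inequality is $r^2\ge 3+\varphi$'' is not a computation you can expect to recover from that construction. A second unresolved issue is the renormalization step itself: the similarity $\sigma$ is not induced by any group element, and the domain constraints come from the two \emph{fixed} disks centered at $(\pm 1,0)$, which are not $\sigma$-invariant; so conjugating by $\sigma$ does not automatically map realizable words to realizable words, and ``finer copies need strictly less room'' is exactly the assertion that would have to be proved. You flag both points as obstacles, but they are the entire content of the theorem, not refinements.

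The paper's proof avoids all of this by working entirely inside the lens with different, explicitly chosen words. Setting $\zeta_5=e^{2\pi i/5}$, $E=\zeta_5-\zeta_5^2$ (so $|E+1|=r$ at the critical value), $F=1-\zeta_5+\zeta_5^2-\zeta_5^3$ and $G=2F-E$ on the segment $E'E$, it verifies that $a^{-2}b^{-1}a^{-1}b^{-1}$, $abab^2$, and $abab^{-1}a^{-1}b^{-1}$ act as translations carrying $E'F'$, $F'G'$, $G'E$ onto $GF$, $FE$, $E'G$ respectively, with no intermediate image ever leaving the intersection of the two disks. This realizes a three-interval exchange on $E'E$ whose translation lengths are golden-ratio-incommensurable with $|E-E'|$, so the orbit of the origin along the segment is infinite. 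If you want to pursue your renormalization picture, you would need to replace the circumradius-$2$ pentagon by translations of this much shorter, lens-confined kind and then actually verify, word by word, that every forced pre-image stays in the appropriate closed disk at $r=\sqrt{3+\varphi}$; as written, your argument establishes neither the base case nor the inductive step.
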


\begin{proof}
Referring to Figure~\ref{fig:n=5}, interpreted now as the complex plane, let $\zeta_n=e^{2\pi i/n}$, and the point $E=\zeta_5-\zeta_5^2$. Note that $|E+1|=r$.
We focus on how the line segment $E'E$ moves under specific sequences.  The point $F=1-\zeta_5+\zeta_5^2-\zeta_5^3$
lies on $E'E$, as does the point $G=2F-E$.  We have three cases:
\begin{enumerate}
    \item Line segment $E' F'$ is transformed by $a^{-2}  b^{-1} a^{-1} b^{-1}$ to line segment $G F$.
    \item Line segment $F' G'$ is transformed by $a b a b^2$ to line segment $F E$.
    \item Line segment $G' E$ is transformed by $a b a b^{-1} a^{-1} b^{-1}$ to line segment $E' G$.
\end{enumerate}
Together, these three operations can translate any portion of the line segment $E' E$ piecewise onto itself.
At no time does any point leave the intersection of the two disks during these transformations.
The first two cases are translations of length $|F-F'|$, and the third case is a translation of length $|E-G|$.  These two values are not rationally related to the total length $|E - E'|$, since $|E-E'|/|F-F'|=\varphi$.
We can thus map the origin to successive points along $E' E$, by repeatedly choosing the transformation matching the region the point is in, indefinitely; it has an infinite image.
\end{proof}

\begin{figure}[!ht]
\centering
    \begin{subfigure}{0.49\linewidth}
        \includegraphics[width=\linewidth]{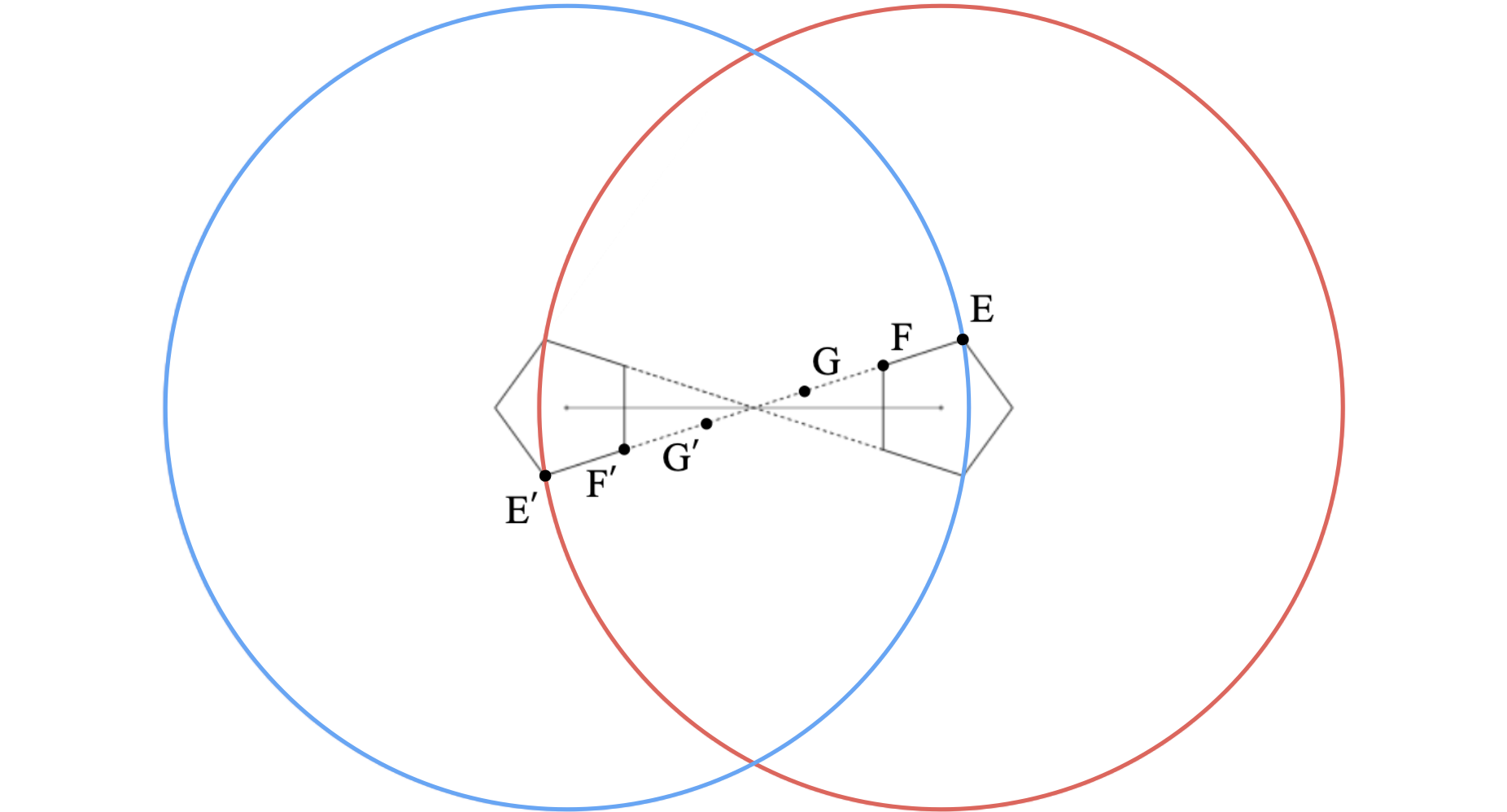}
    
    \caption{}
    \label{fig:n=5}
    \end{subfigure}
\hfil
    \begin{subfigure}{0.49\linewidth}
        \includegraphics[width=\linewidth]{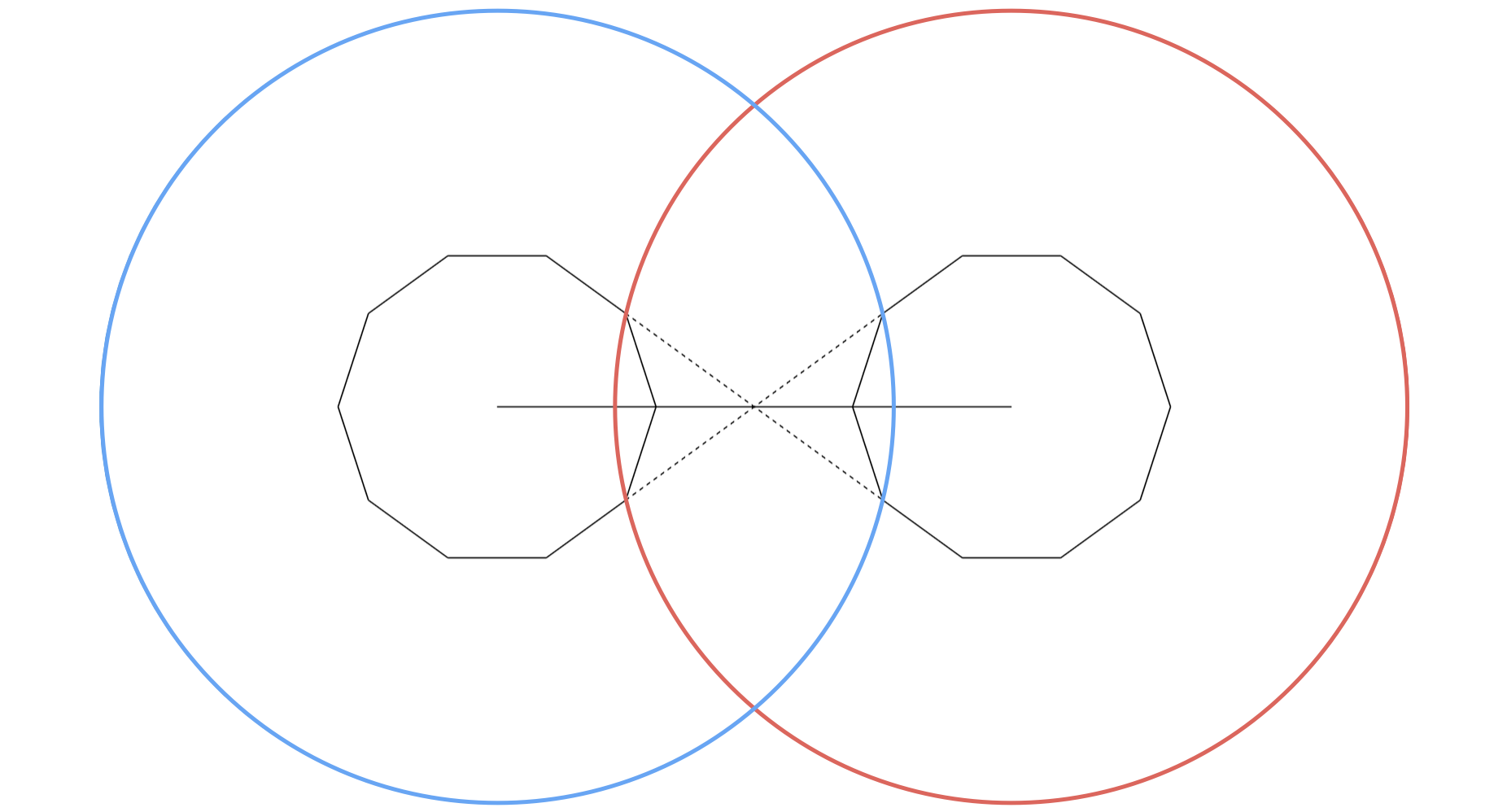}
    
    \caption{}
    \label{fig:n=10}
    \end{subfigure}

    \begin{subfigure}{0.49\linewidth}
        \includegraphics[width=\linewidth]{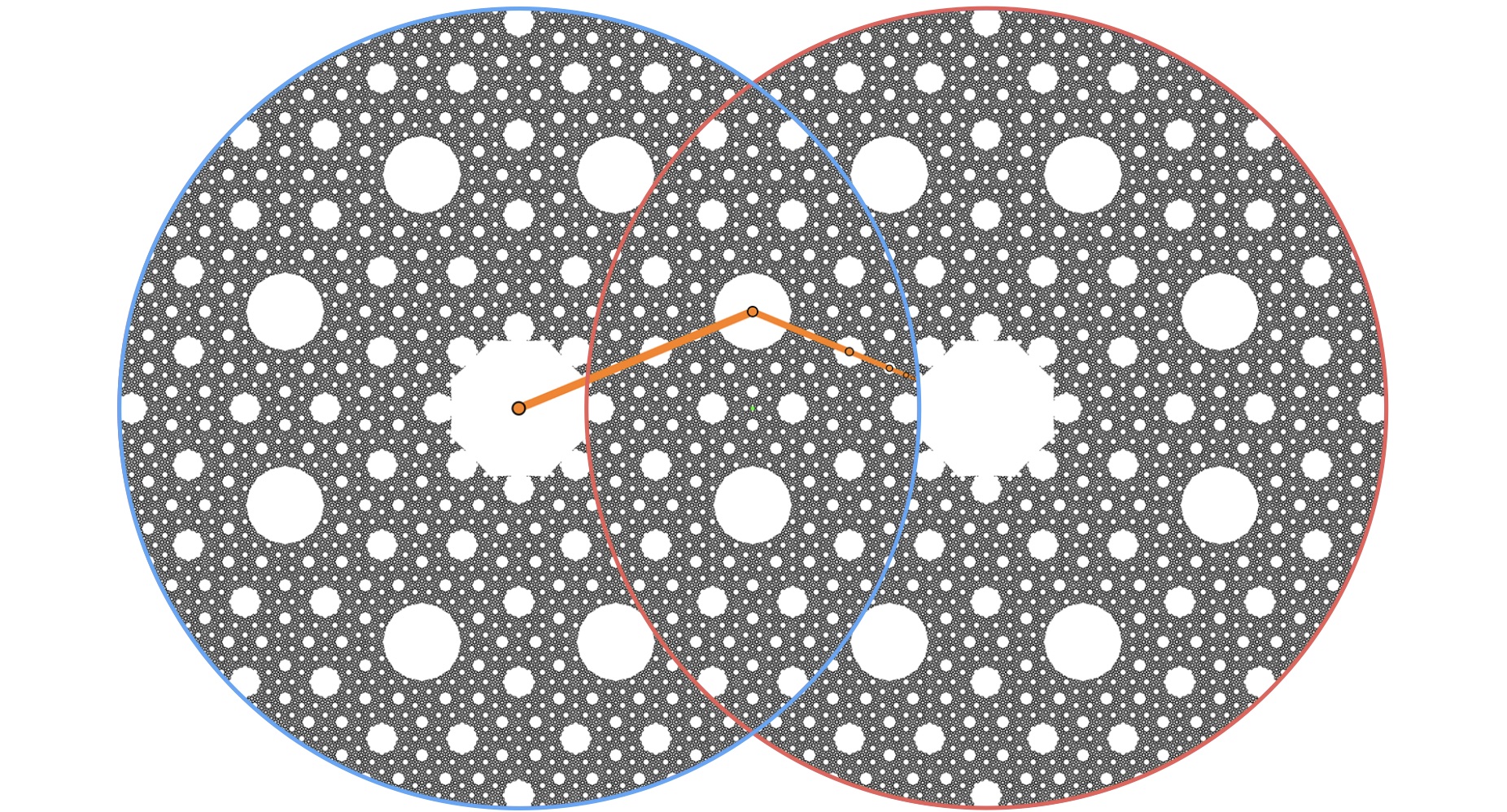}
    
    \caption{}
    \label{fig:n=8}
    \end{subfigure}
\hfil
    \begin{subfigure}{0.49\linewidth}
        \includegraphics[width=\linewidth]{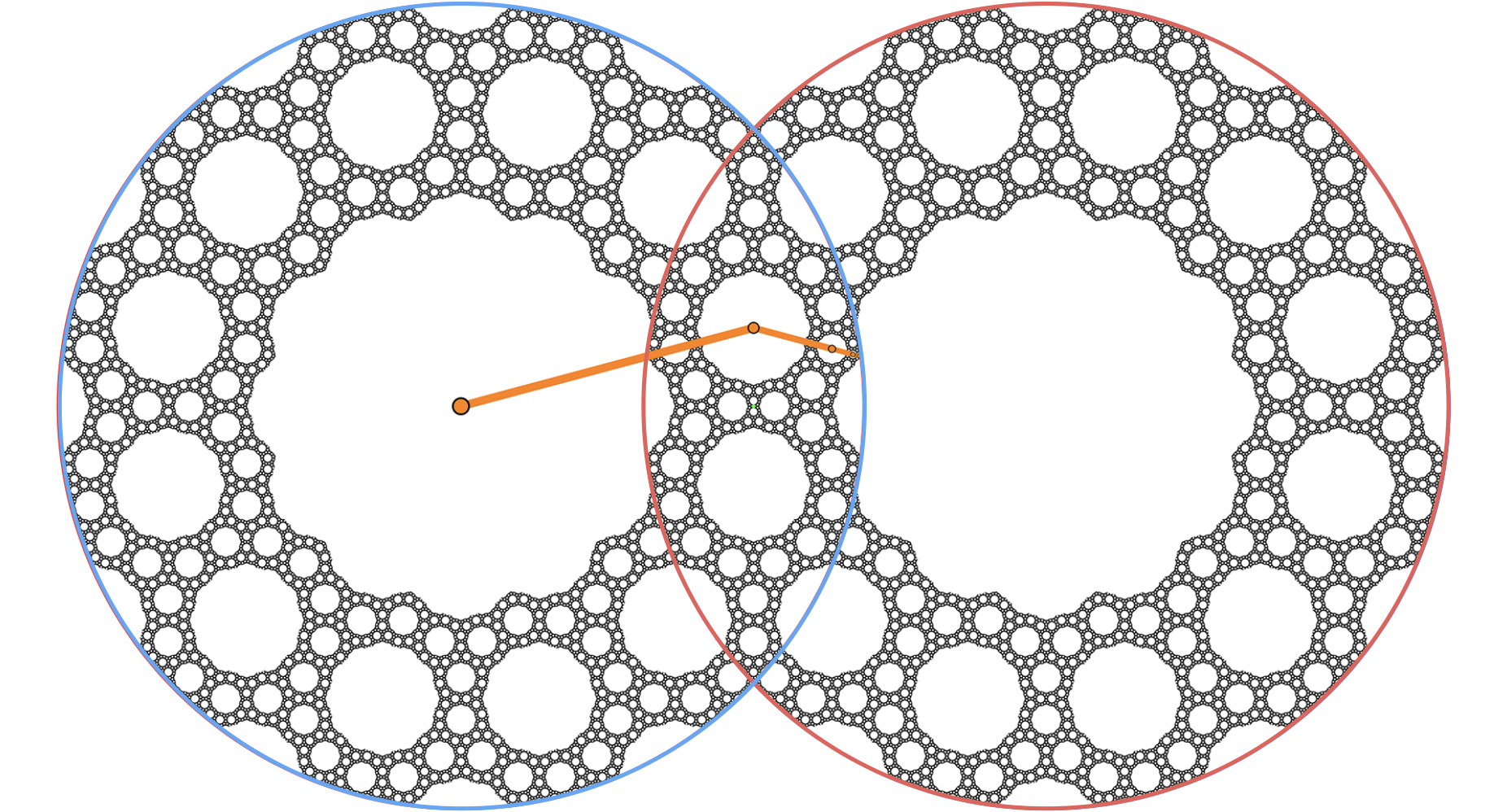}
    \caption{}
    \label{fig:n=12}
    \end{subfigure}

\caption{Geometric constructions for critical radius: (a) $n=5$, (b) $n=10$, (c) $n=8$, (d) $n=12$.}
    \label{fig:my figure}
    \end{figure}

% ^Reminder to make sure we have consistent appearance across images in this figure

For the cases of $n=8$ (Figure~\ref{fig:n=8}) and $n=12$ (Figure~\ref{fig:n=12}), their characteristic fractals (see below) can provide insight.
%the orbit of the origin near the corresponding critical radius produces self-similar structure. 
A path of consecutive line segments that follows the fractal structure can be realized, starting from the center of one disk and approaching the disk boundary from the interior. For $n=8$, consecutive segments scale down by a factor of $\sqrt{2}-1$ and traverse angles of $\pi/8$. We can calculate the corresponding limit point from the path to yield $r=\sqrt{5(2-\sqrt{2})}\approx1.711.$
For $n=12$, an analogous construction gives a scale factor of $2-\sqrt{3}$, and $r=\sqrt{2(20-11\sqrt{3})}\approx1.377.$
These closed-form radii rely on the limit point lying on the disk boundary at the critical value, which has not been proven.

\section*{Critical Transitions and Fractals}

Precisely at any group's critical radius, we always observe a distinct fractal embedded in the image.
It seems remarkable that these fractals have gone unnoticed for so long; they seem as natural as, e.g., the Mandelbrot set. 
We can define the \emph{characteristic fractal} for $GG$ to be the set of points with infinite orbits at $GG(r_c(GG))$. In particular, this associates a unique fractal with every $n \not\in \{2, 3, 4, 6\}$, and similarly when $n_1\neq n_2$.\footnote{We also have fractals when $r_1\neq r_2$, but in that case the critical radius becomes a two-parameter family.} The canonical example is the fractal for $n=5$, shown in Figure~\ref{fig:n=5-fractal}. We also include in this paper the fractals for $n=8$ (Figure~\ref{fig:n=8}) and $n=12$ (Figure~\ref{fig:n=12}). In Appendix B, we include higher-resolution images of the characteristic fractals for all $n$ up to 20.

The appearance of fractals here seems somewhat mysterious. The hallmark of a fractal is the repeating of a pattern on successively smaller scales. But unlike fractals constructed with an explicit recursive rule, there are no scaling operations in these systems, only rotations. Where do they come from? Why do they look so different for different $n$? Some insight may be gained by considering Figure~\ref{fig:CRT-shrinking}, which shows that in fact, rotations can combine to shrink patterns.

A natural question for any characteristic fractal is whether it is the closure of the orbit of a single point, or whether it consists of finitely or infinitely many disjoint closures of orbits. In most cases the fractals seem to be closures of the orbit of a single point. For $n=23$, it appears that the fractal consists of two distinct pieces, symmetric about the $x$-axis. However, it is possible that these pieces are connected, and we have not been able to reach this connection numerically.

An especially interesting case is $n=7$. Here, there seems to be a clear fractal structure. We can zoom in several times and see the same structure repeating. But then at a certain point, after zooming in by a factor of 500,000, the pattern suddenly changes. This was not apparent until we imaged the fractal to over 2 trillion points. It is possible that this broken scale symmetry is a numerical artifact, but evidence argues against this. If it is real, this is perhaps the most mysterious process related to these systems we have yet observed. 
A video showing this transition may be found at \url{https://www.youtube.com/watch?v=FFeSlh0ifYE}.
%We include some images of this in the online Supplement. 
A portion of $GG_7$ just short of the critical radius, shown in Figure~\ref{fig:n=7}, reveals the characteristic $n=7$ fractal motif.

In Figure~\ref{fig:Koch}, we see that embedded within $GG_{12}$ lies a Koch-snowflake-like fractal seemingly based on four-fold symmetry, rather than the traditional three-fold.

\begin{figure}[h!tbp]
\centering
\begin{minipage}[t]{0.35\textwidth}
\centering
	\includegraphics[width=\textwidth]{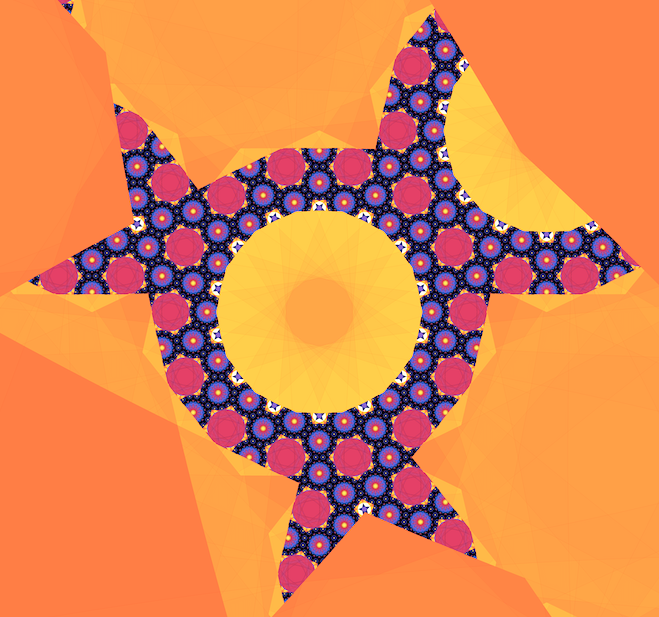}
    \subcaption{A portion of $GG_7(1.6233)$. }
	\label{fig:n=7}
 \end{minipage}
 \hfill
\begin{minipage}[t]{0.57\textwidth} 
\centering
	\includegraphics[width=\textwidth]{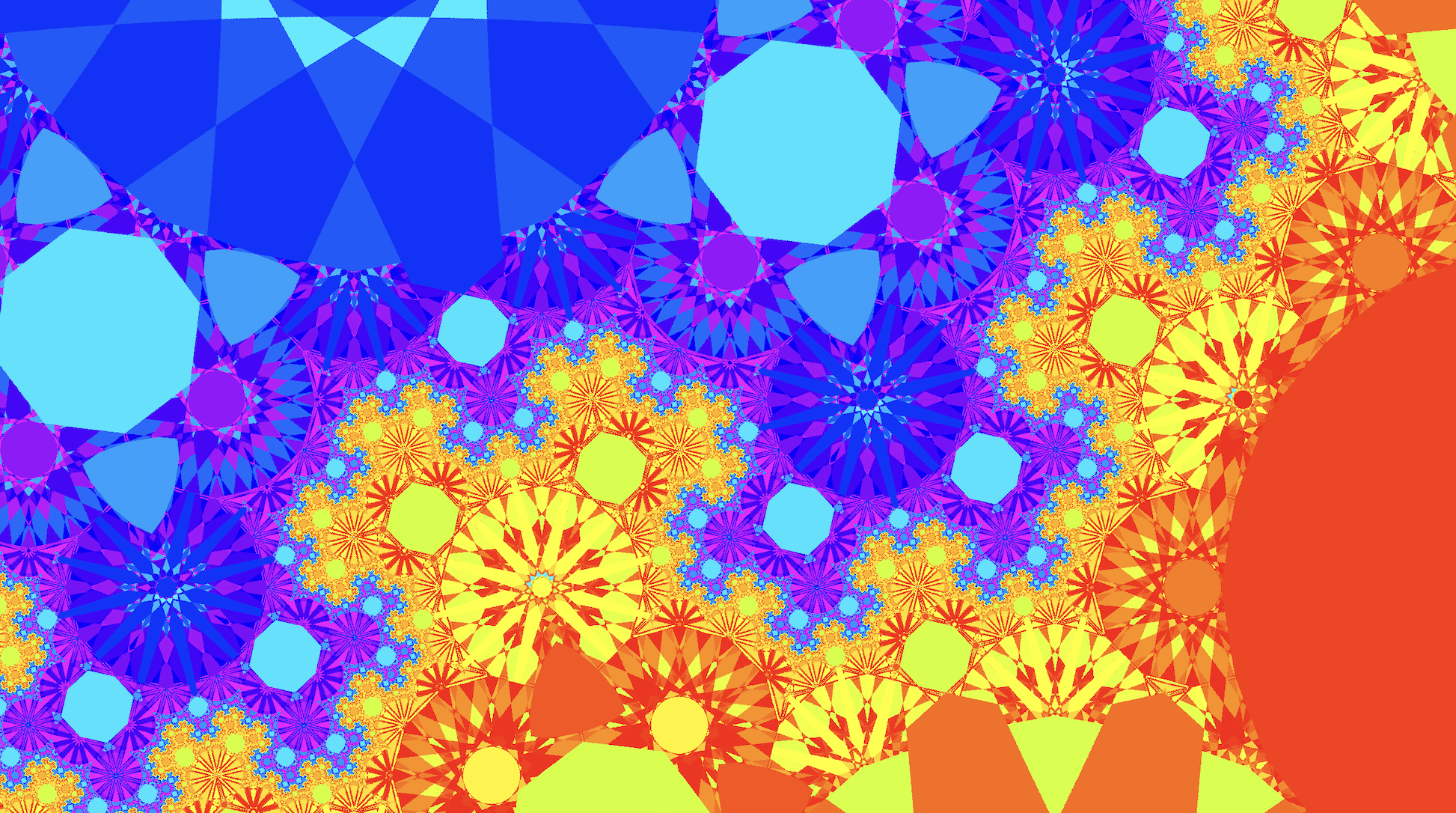}
    \subcaption{A portion of the generator $ab^{-1}$ for $GG_{12}(\sqrt{2})$.
%    reveals a hidden Koch-snowflake-like curve. 
%    Color is determined by how often a point travels around the left disk relative to the right disk. 
}
	\label{fig:Koch}
 \end{minipage}
    \caption{}
%    \caption{Examples of emergent fractals}
	\label{fig:fractals}
\end{figure}

\section*{Numerical Models and Algorithms}
All of the images in this paper were generated by programs which simulate compound symmetry groups. Conceptually, for each point we want to image, we consider all possible rotation sequences applied to it, and plot the results---i.e., we plot its orbit. Figure~\ref{fig:n=5-fractal}, for example, was generated this way. When we want to show the action of the group on the entire space, it is more efficient to only image the disk boundaries. 
We image a boundary by discretizing it into small segments, imaging those segments, drawing the images into a high-resolution bitmap, then filling the resulting spaces within the bounded regions with appropriate colors, according to some coloring rule.
This is how Figures~\ref{fig:3-5}, \ref{fig:n=7}, \ref{fig:n=9}, and \ref{fig:n=5-pretty} were generated.

\begin{figure}[h!tbp]
\centering
\hspace{.2in}
\begin{minipage}[c]{0.42\textwidth} 
%\begin{figure}[h!tbp]
	\centering
	\includegraphics[width=\textwidth]{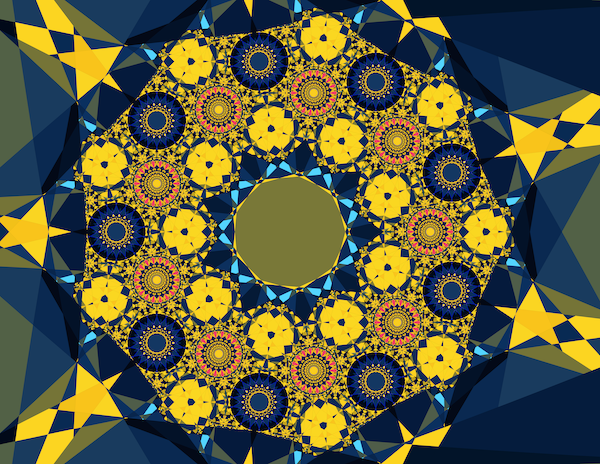}
	\subcaption{}
	\label{fig:n=9}
\end{minipage}
\hfill
\begin{minipage}[c]{0.4\textwidth}
\centering
	\includegraphics[width=\textwidth]{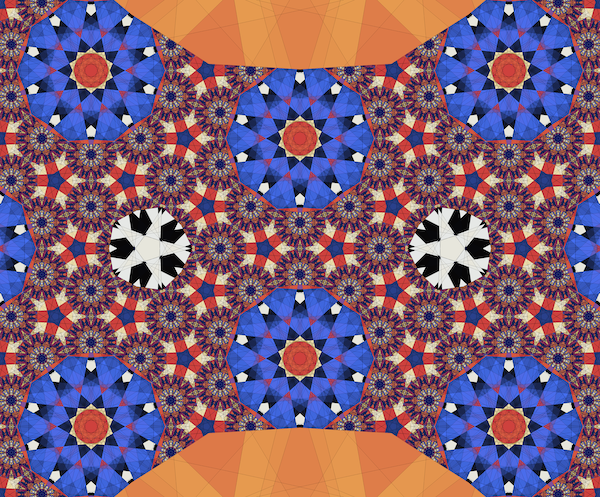}
	\subcaption{}
	\label{fig:n=5-pretty}
\end{minipage}
\hspace{.2in}
\caption{Filled images: (a) a portion of $GG_9(1.408)$, (b) a portion of $GG_5(2.144)$.}
\end{figure}

The use of frontier search \cite{Korf2005} when generating orbits helps enormously, allowing us to not keep the entire orbit in memory. It has enabled some points to be imaged to over 10 trillion distinct targets, helping us refine our critical radius estimates, and exposing the curious behaviors of some fractals discussed above.

\section*{Single-Generator Images}

Additional structure can be revealed by imaging orbits under a single generator, rather than all group elements. In the simplest case, we have $a^{\alpha}b^{\beta}$. Two examples are shown in Figure~\ref{fig:single-generator}. This defines a discrete dynamical system which is an iterated piecewise isometry. Some work has been done to characterize these systems, but many questions remain. \cite{Smith2019, Gossow2020}

\begin{figure}[h!tbp]
\centering
\begin{minipage}[t]{0.35\textwidth} 
	\includegraphics[width=\textwidth]{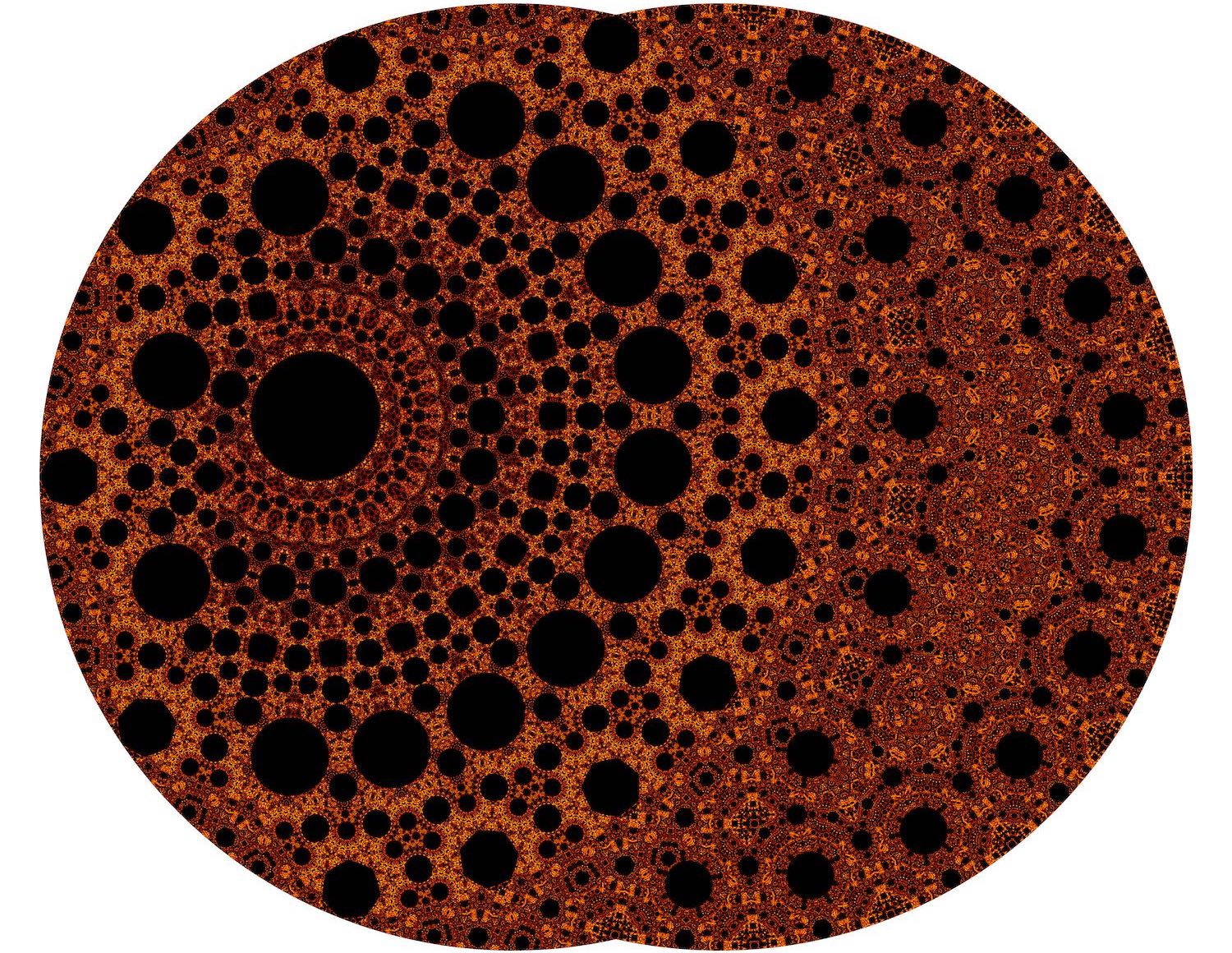}
        	\subcaption{ ${GG_{21}}(5.82)$ using $a^{2}b^{-1}$} % Add subcaption text if desired, or use \subcaption* to suppress (a), (b), etc. labels
        	\label{fig:n=21}
\end{minipage}
~ %add desired spacing between images, e. g., ~, \quad, \qquad, \hfill etc.	
\begin{minipage}[t]{0.4\textwidth} 
	\includegraphics[width=\textwidth]{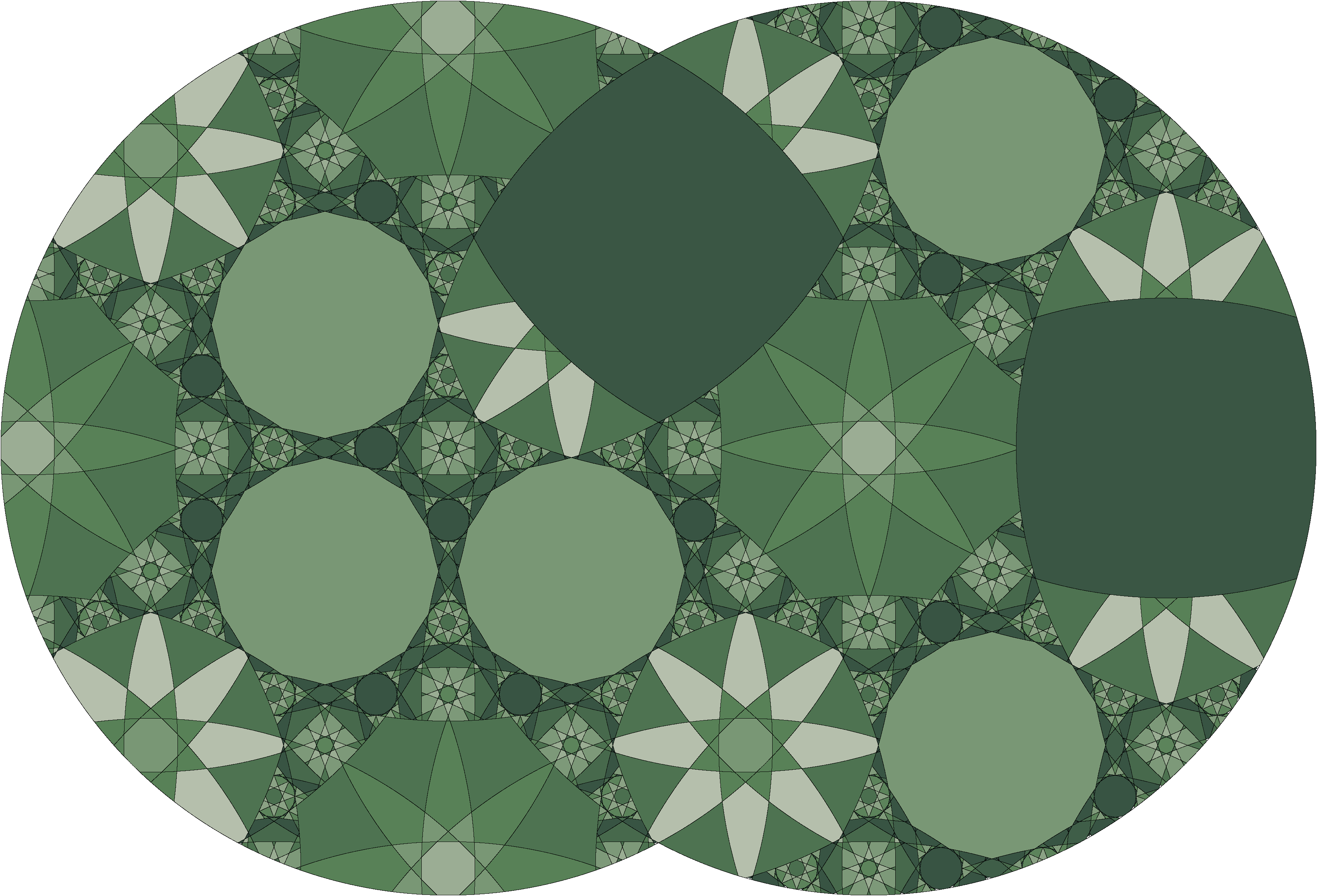}
        	\subcaption{${GG_{8}}(2.12)$ using $a^{2}b^{5}$}
        	\label{fig:n=16}
\end{minipage}
\caption{(a) The orbit of the upper intersection point is plotted and colored according to its local density. (b) The full boundary is plotted, with spaces colored according to the order of their orbit.
}
\label{fig:single-generator}
\end{figure}

\section*{Critical Radii}

Table~\ref{table:radii} summarizes our knowledge of critical radii for $GG_n$ with $n < 20$. A more complete table, up to $n=100$, may be found in Appendix A. In all cases, points were found with a minimum of 10 billion images, with some up to 10 trillion. These estimates may be too high because points with infinite image were missed, or too low because the points searched had very large but finite images. However, there is good agreement with the geometrically derived values, and we have confidence they are good to about 5 decimal places.

\begin{table}[h!tbp]
\centering
\caption{Critical radii for $GG_n$.}
{\scriptsize % reduce the font size for entire table
\begin{minipage}[t]{0.7\textwidth} 
\renewcommand{\arraystretch}{1.9}
\begin{tabular}{|c|c|c|c|}
\hline
  $n$ & Numerical estimate & Algebraic expression & Minimum Polynomial  \\
\hline
5 & 2.148961 & $\sqrt{3 + \varphi}\approx 2.148961$ & $x^4 - 7x^2 + 11$ \\
\hline
7 & 1.623574 & & \\
\hline
8 & 1.711411 & $\sqrt{5(2-\sqrt{2})} \approx 1.711412$ & $x^4 - 20x^2 + 50$ \\
\hline
9 & 1.408482 & & \\
\hline
10 & 1.543357 & $\sqrt{4 - \varphi}\approx 1.543362$ & $x^4 - 7x^2 + 11$\\
\hline
11 & 1.290582 & & \\
\hline
12 & 1.376547 & $\sqrt{2(20-11\sqrt{3})} \approx 1.376547$ & $x^4 - 80x^2 + 148$\\
\hline
\end{tabular}
\end{minipage}
\renewcommand{\arraystretch}{1}
%\hspace{.5in}
\begin{minipage}[t]{0.29\textwidth} 
\renewcommand{\arraystretch}{1.9}
\begin{tabular}{|c|c|}
\hline
  $n$ & Numerical estimate     \\
\hline
13 & 1.213594   \\
\hline
14 & 1.196554  \\
\hline
15 & 1.163276  \\
\hline
16 & 1.148470  \\
\hline
17 & 1.127509  \\
\hline
18 & 1.121505  \\
\hline
19 & 1.104246  \\
\hline
\end{tabular}
\end{minipage}
\renewcommand{\arraystretch}{1}
}
\label{table:radii}
\end{table}

%\section*{Aperiodic Behavior}
%\begin{itemize}
%\item{Quasicrystals... there is clearly some connection. But it's not clear what we can say formally.}
%\item{Tom's direct observation of aperiodic behavior for N=5 point approach to origin (Thue-Morse sequence??)}
%\end{itemize}

\section*{Summary and Conclusions}

The concept of compound symmetry group opens up a new frontier in mathematics. Here we have just begun this exploration, by considering the two-disk compound symmetry groups. This investigation has revealed a new family of fractals, and a rich new source of spaces that combine symmetries in interesting and unexpected ways. 
%Space restrictions prohibit us exhibiting here all but a small fraction of the images we have generated. 
These are new ``places'' to explore, similar for example to ``Seahorse Valley'' in the Mandelbrot set, that have until now remain unseen. We must omit due to space many additional topics we would like to cover, such as the appearance of quasicrystals when we move beyond the critical radius, and other observations of aperiodic behavior.

While we have made significant progress in understanding two-disk compound symmetry groups, yet more work can still be done. In many cases, we lack even a basic theoretical understanding of the behaviors that cause the transition from finite to infinite size in these groups. For example, does every infinite two-disk group contain a point whose image is infinite, or a generator of infinite order? Is the critical radius of a two-disk system always an algebraic number, and is there a general formula for the critical radius? What dynamics are responsible for the creation of the fractals?

Similar questions can be raised more generally for multi-disk systems or arbitrary compound symmetry groups, where the behavior governing infinite size groups is more complicated. For example, we observe that Theorem~\ref{thm:not_2346_group_infinite} fails to generalize to three-disk systems: consider a set of three disks centered at, say, $(0,0)$, $(1,0)$, and $(\sqrt{2},0)$, and consider the compound symmetry group obtained by taking rotation increments $n_1 = n_2 = n_3 = 2$. Then if we choose disk radii to be sufficiently large, the resulting compound symmetry group will be infinite, because the three corresponding rotations of the plane generate a pair of translations along the $x$-axis whose ratio is irrational. Another question that presents itself is whether it is even decidable whether a given multi-disk compound symmetry group is finite.

Some movies of systems with varying zooms and radii may be found at \url{https://tinyurl.com/yz9pntwy}.

% \begin{figure}[h!tbp]
% 	\centering
% 	\includegraphics[width=.2\textwidth]{figures/multi-disk.png}
%     \caption{An infinite three-disk compound symmetry group with $n=2$.}
% 	\label{fig:multi}
% \end{figure}

\section*{Acknowledgments}
We thank Doug Engel for Gizmo Gears, and his two definitive books on circle puzzles. We thank Brandon Enright for his significant contributions, insights, and ideas. We thank Oskar van Deventer and Carl Hoff, who first introduced Hearn to this problem, and contributed insights. We also thank Bram Cohen, Scott Elliott, Landon Kryger, Andreas Nortmann, Jason Smith, Nathaniel Virgo, and all others who shared interest and insights in the \href{https://twistypuzzles.com}{twistypuzzles.com} forum.

% References %
    
{\setlength{\baselineskip}{13pt} % tighten line spacing for bibliography
\raggedright				% no right justification for References

} % end setlength, raggedright

\section*{Appendix A: Critical Radii up to $n=100$}

\begin{table}[h!tbp]
\centering
\caption{Critical radii for $GG_n$.}
{\scriptsize % reduce the font size for entire table
\begin{minipage}[t]{0.24\textwidth} 
\vspace{0pt}
\renewcommand{\arraystretch}{1.7}
\begin{tabular}{|c|c|}
\hline
  $n$ & Numerical estimate  \\
\hline
5 & 2.148961 \\
\hline
7 & 1.623574 \\
\hline
8 & 1.711411 \\
\hline
9 & 1.408482 \\
\hline
10 & 1.543357 \\
\hline
11 & 1.290582 \\
\hline
12 & 1.376547 \\
\hline
13 & 1.213594   \\
\hline
14 & 1.196554  \\
\hline
15 & 1.163276  \\
\hline
16 & 1.148470  \\
\hline
17 & 1.127509  \\
\hline
18 & 1.121505  \\
\hline
19 & 1.104246  \\
\hline
20 & 1.100581   \\
\hline
21 & 1.086016   \\
\hline
22 & 1.078162   \\
\hline
23 & 1.072011   \\
\hline
24 & 1.071404   \\
\hline
25 & 1.061321   \\
\hline
26 & 1.056958   \\
\hline
27 & 1.052668   \\
\hline
28 & 1.049731   \\
\hline
29 & 1.045731   \\
\hline
\end{tabular}
\end{minipage}
%\renewcommand{\arraystretch}{1}
%\hspace{.5in}
\begin{minipage}[t]{0.24\textwidth} 
\vspace{0pt}
\renewcommand{\arraystretch}{1.7}
\begin{tabular}{|c|c|}
\hline
  $n$ & Numerical estimate     \\
\hline
30 & 1.043758   \\
\hline
31 & 1.040158   \\
\hline
32 & 1.037741   \\
\hline
33 & 1.035512   \\
\hline
34 & 1.033498   \\
\hline
35 & 1.031647  \\
\hline
36 & 1.030127  \\
\hline
37 & 1.028368  \\
\hline
38 & 1.026958  \\
\hline
39 & 1.025591  \\
\hline
40 & 1.024352  \\
\hline
41 & 1.023174   \\
\hline
42 & 1.022142   \\
\hline
43 & 1.021105    \\
\hline
44 & 1.020197   \\
\hline
45 & 1.019279   \\
\hline
46 & 1.018459   \\
\hline
47 & 1.017692   \\
\hline
48 & 1.016968   \\
\hline
49 & 1.016287   \\
\hline
50 & 1.015651   \\
\hline
51 & 1.015053   \\
\hline
52 & 1.014492   \\
\hline
53 & 1.013942   \\
\hline
\end{tabular}
\end{minipage}
\begin{minipage}[t]{0.24\textwidth} 
\vspace{0pt}
\renewcommand{\arraystretch}{1.7}
\begin{tabular}{|c|c|}
\hline
  $n$ & Numerical estimate     \\
\hline
54 & 1.013437   \\
\hline
55 & 1.012954   \\
\hline
56 & 1.012498   \\
\hline
57 & 1.012067   \\
\hline
58 & 1.011657   \\
\hline
59 & 1.011267  \\
\hline
60 & 1.010908  \\
\hline
61 & 1.010545  \\
\hline
62 & 1.010210  \\
\hline
63 & 1.009890  \\
\hline
64 & 1.009585  \\
\hline
65 & 1.009294   \\
\hline
66 & 1.009018   \\
\hline
67 & 1.008750   \\
\hline
68 & 1.008499   \\
\hline
69 & 1.008253   \\
\hline
70 & 1.008020   \\
\hline
71 & 1.007797   \\
\hline
72 & 1.007582   \\
\hline
73 & 1.007377   \\
\hline
74 & 1.007180   \\
\hline
75 & 1.006990   \\
\hline
76 & 1.006809   \\
\hline
77 & 1.006633   \\
\hline
\end{tabular}
\end{minipage}
\begin{minipage}[t]{0.24\textwidth} 
\vspace{0pt}
\renewcommand{\arraystretch}{1.7}
\begin{tabular}{|c|c|}
\hline
  $n$ & Numerical estimate     \\
\hline
78 & 1.006465   \\
\hline
79 & 1.006302   \\
\hline
80 & 1.006146   \\
\hline
81 & 1.005996   \\
\hline
82 & 1.005851   \\
\hline
83 & 1.005712  \\
\hline
84 & 1.005578  \\
\hline
85 & 1.005447  \\
\hline
86 & 1.005321  \\
\hline
87 & 1.005200  \\
\hline
88 & 1.005083  \\
\hline
89 & 1.004970   \\
\hline
90 & 1.004860   \\
\hline
91 & 1.004754   \\
\hline
92 & 1.004652   \\
\hline
93 & 1.004552   \\
\hline
94 & 1.004456   \\
\hline
95 & 1.004363   \\
\hline
96 & 1.004273   \\
\hline
97 & 1.004186   \\
\hline
98 & 1.004101   \\
\hline
99 & 1.004019   \\
\hline
100 & 1.003939  \\
\hline
\end{tabular}
\end{minipage}
\renewcommand{\arraystretch}{1}
}
\label{table:more-radii}
\end{table}

\pagebreak

\section*{Appendix B: Characteristic Fractals up to $n=20$}

Here we show only the set of points with infinite orbit, at the critical radius, for each $n$.

\begin{figure}[h!tbp]
	\centering
	\includegraphics[width=\textwidth]{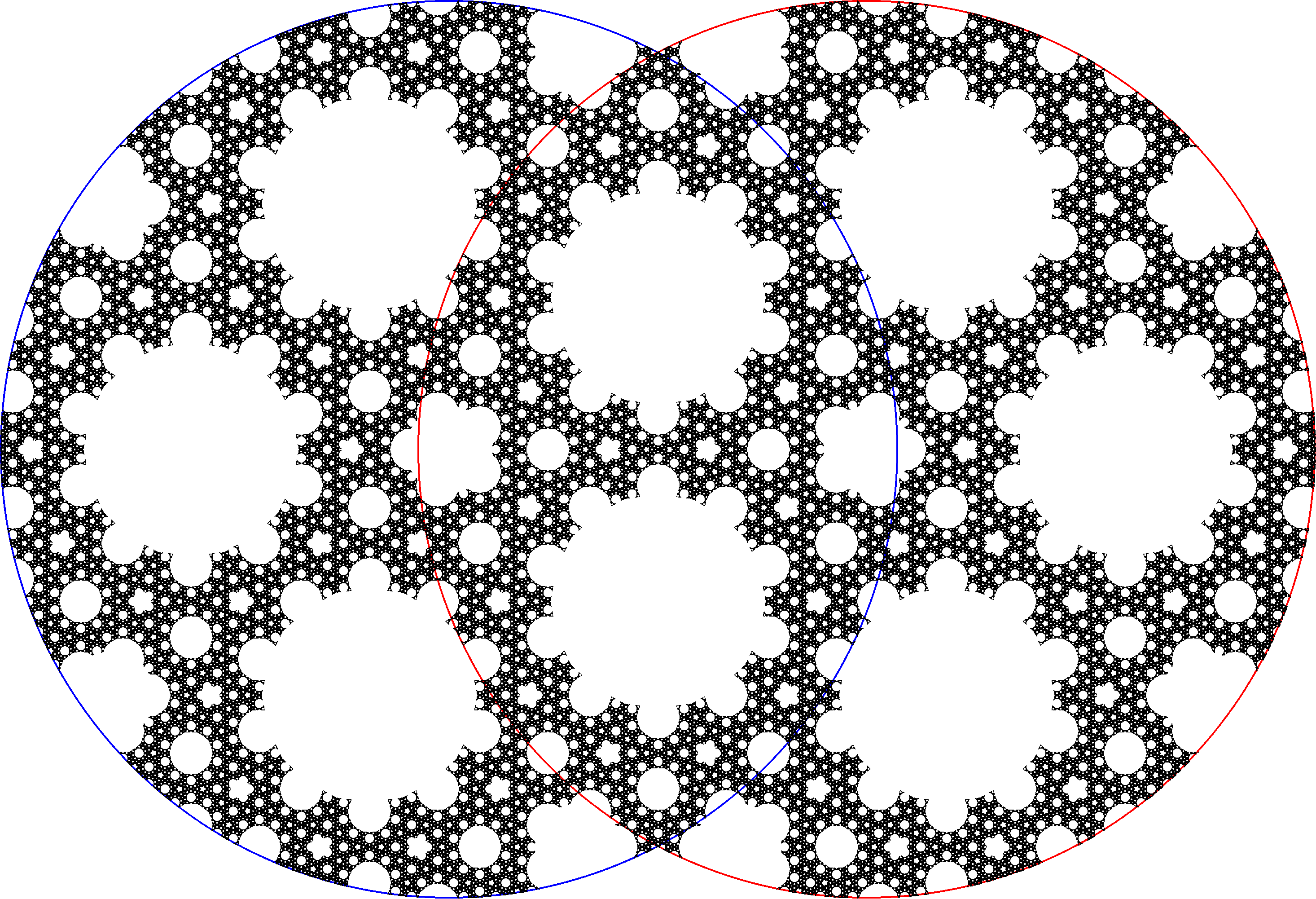}
    \caption{$GG_5(2.148958)$.}
\end{figure}

\begin{figure}[h!tbp]
	\centering
	\includegraphics[width=\textwidth]{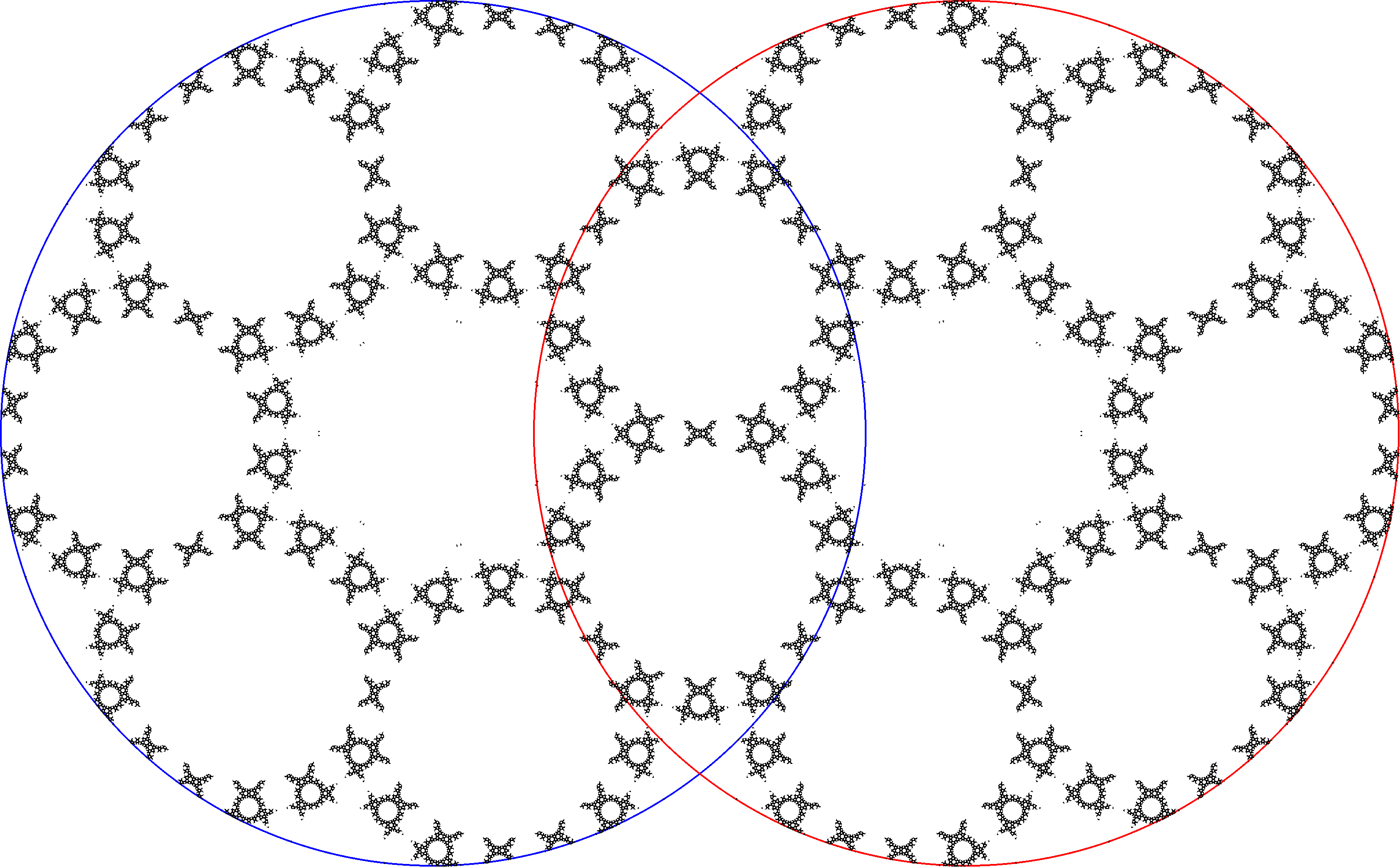}
    \caption{$GG_7(1.623574)$.}
\end{figure}

\begin{figure}[h!tbp]
	\centering
	\includegraphics[width=\textwidth]{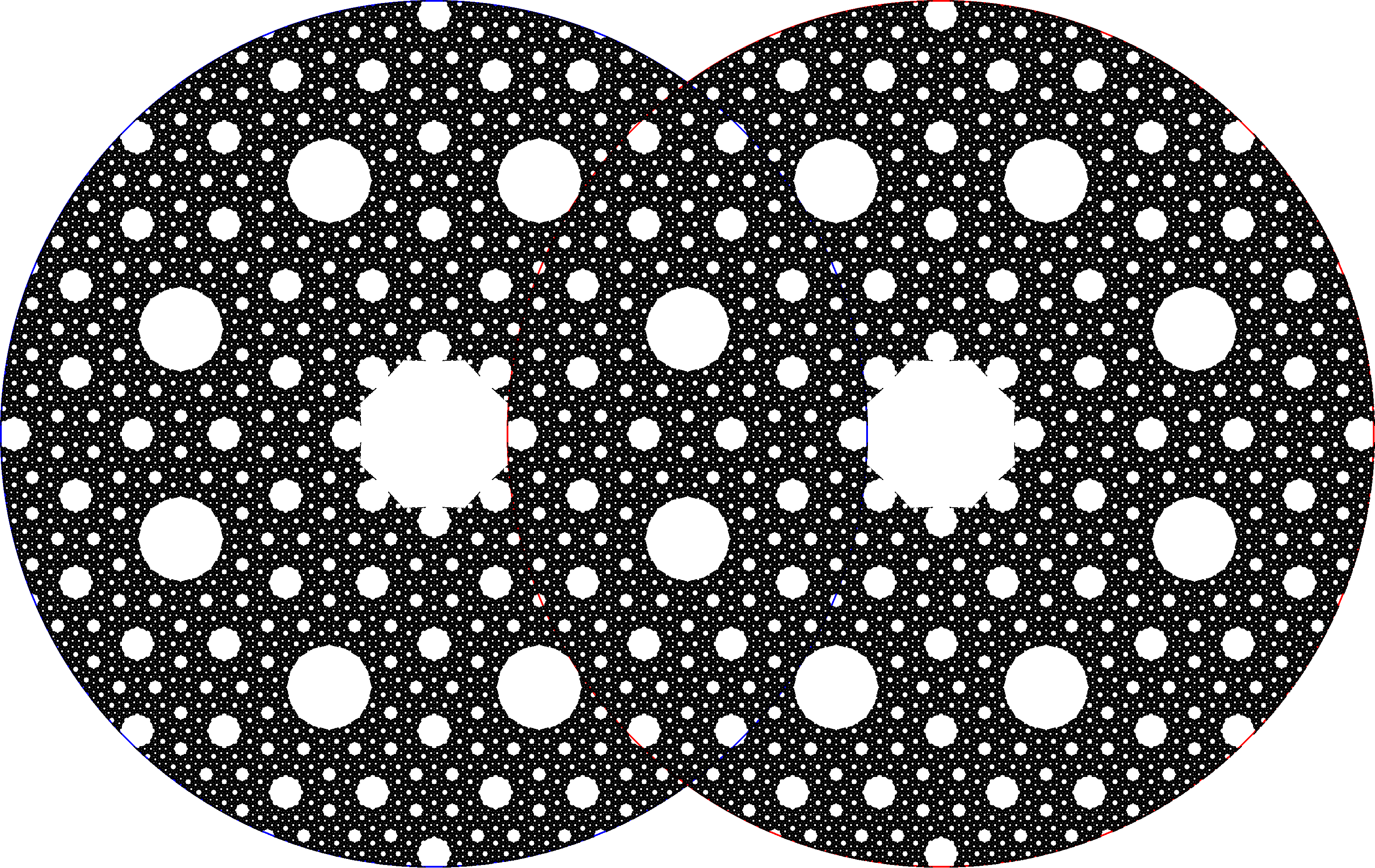}
    \caption{$GG_8(1.711405)$.}
\end{figure}

\begin{figure}[h!tbp]
	\centering
	\includegraphics[width=\textwidth]{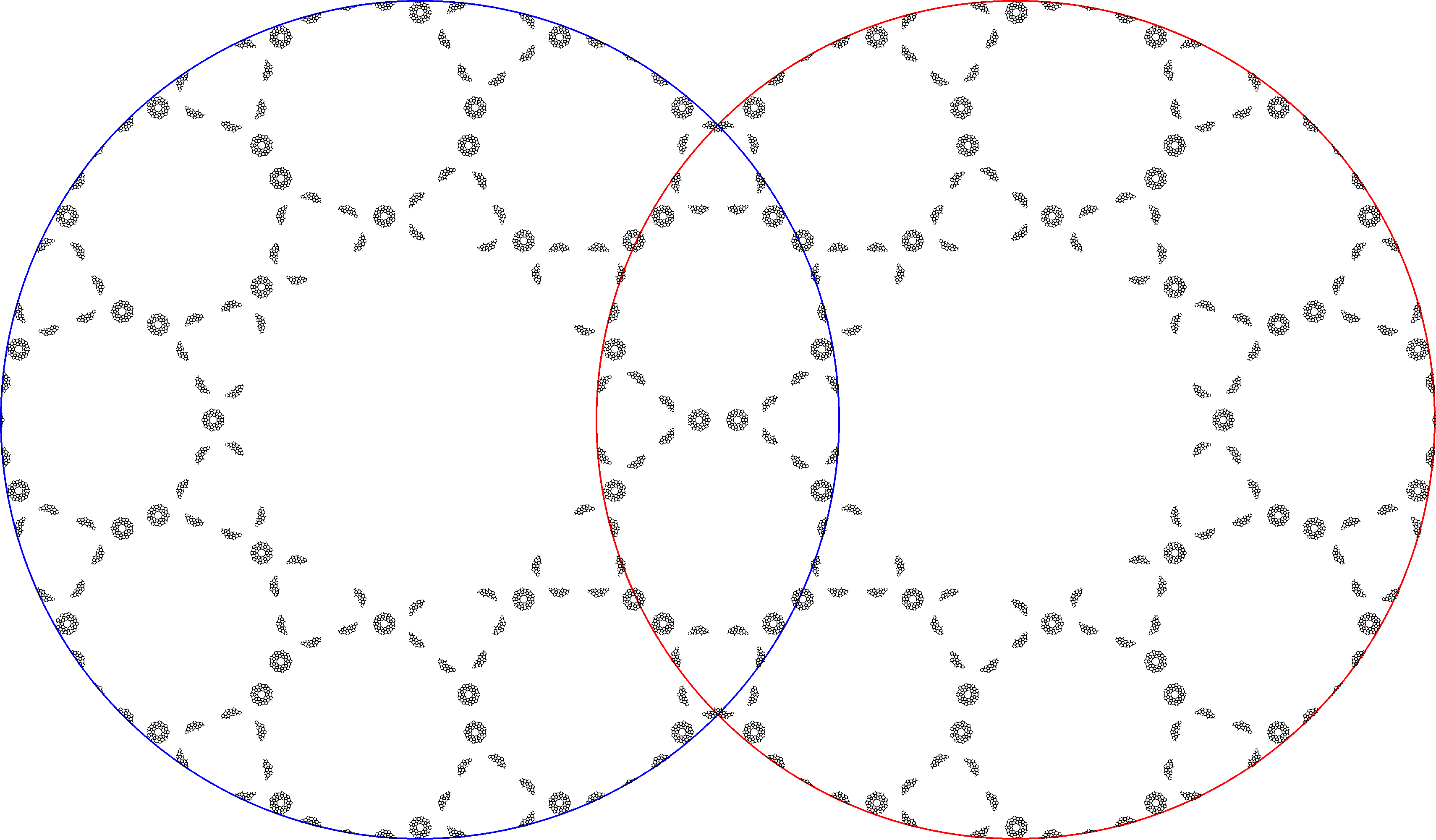}
    \caption{$GG_9(1.408482)$.}
\end{figure}

\begin{figure}[h!tbp]
	\centering
	\includegraphics[width=\textwidth]{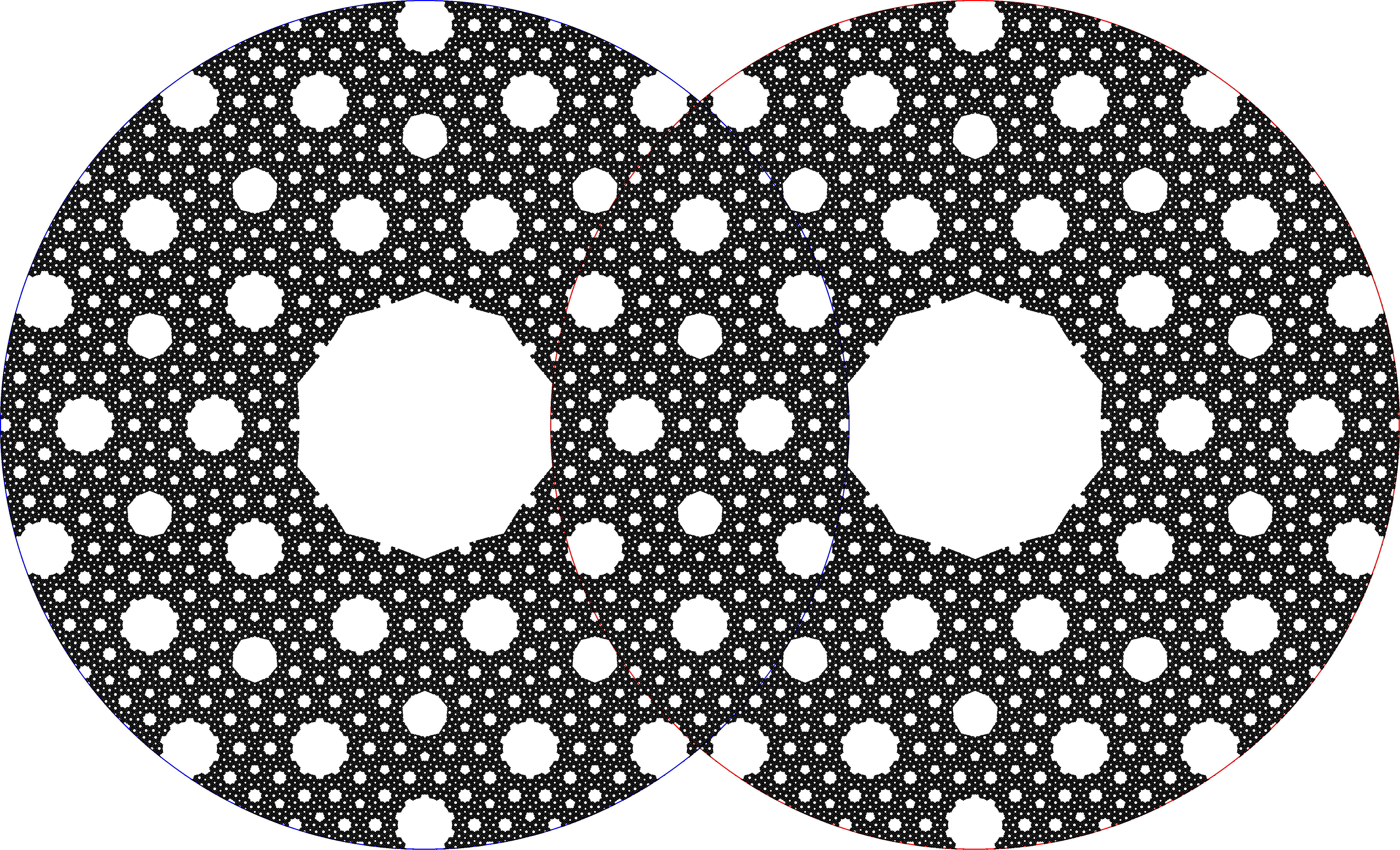}
    \caption{$GG_{10}(1.543357)$.}
\end{figure}

\begin{figure}[h!tbp]
	\centering
	\includegraphics[width=\textwidth]{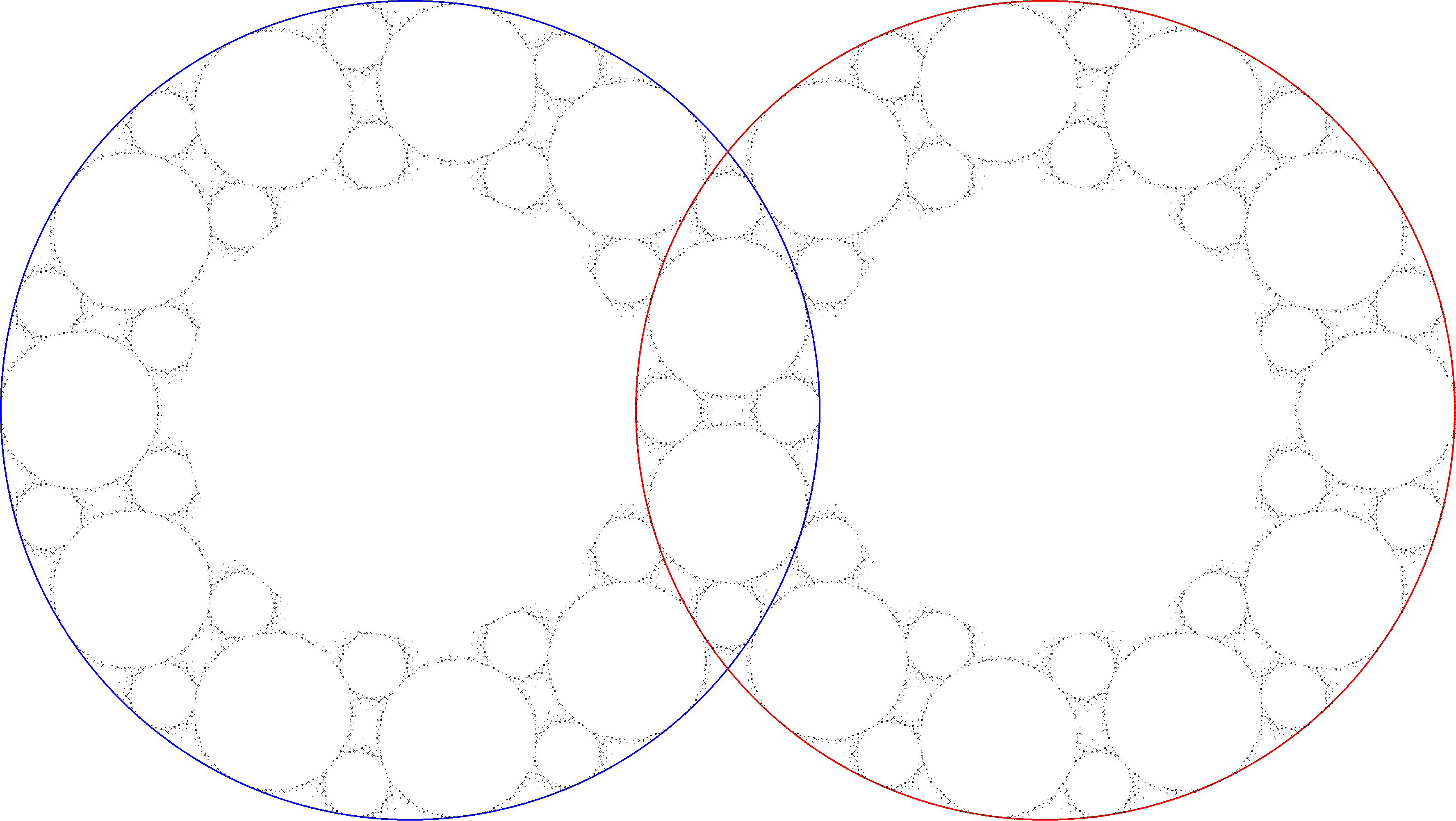}
    \caption{$GG_{11}(1.290582)$.}
\end{figure}

\begin{figure}[h!tbp]
	\centering
	\includegraphics[width=\textwidth]{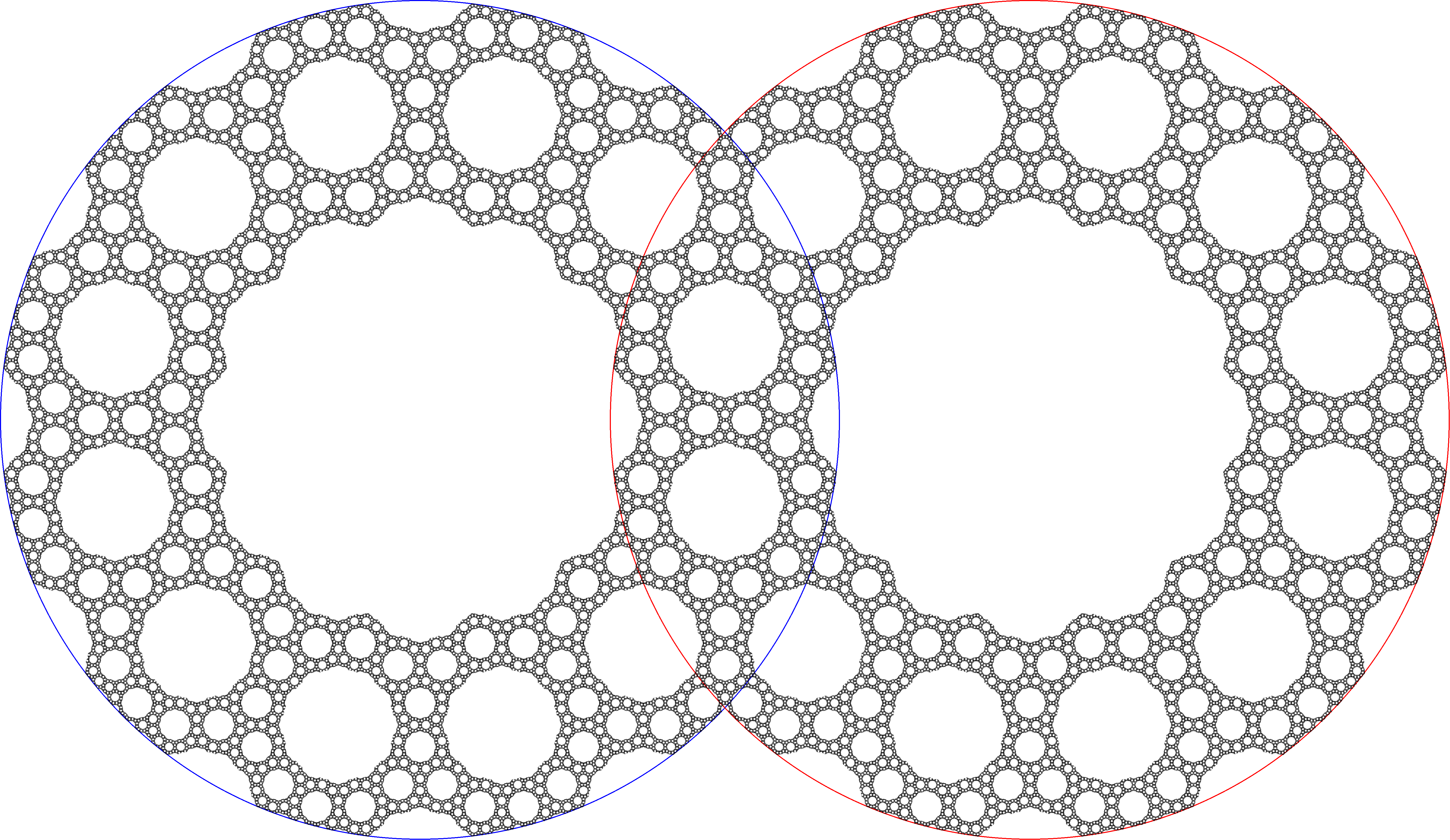}
    \caption{$GG_{12}(1.376547)$.}
\end{figure}

\begin{figure}[h!tbp]
	\centering
	\includegraphics[width=\textwidth]{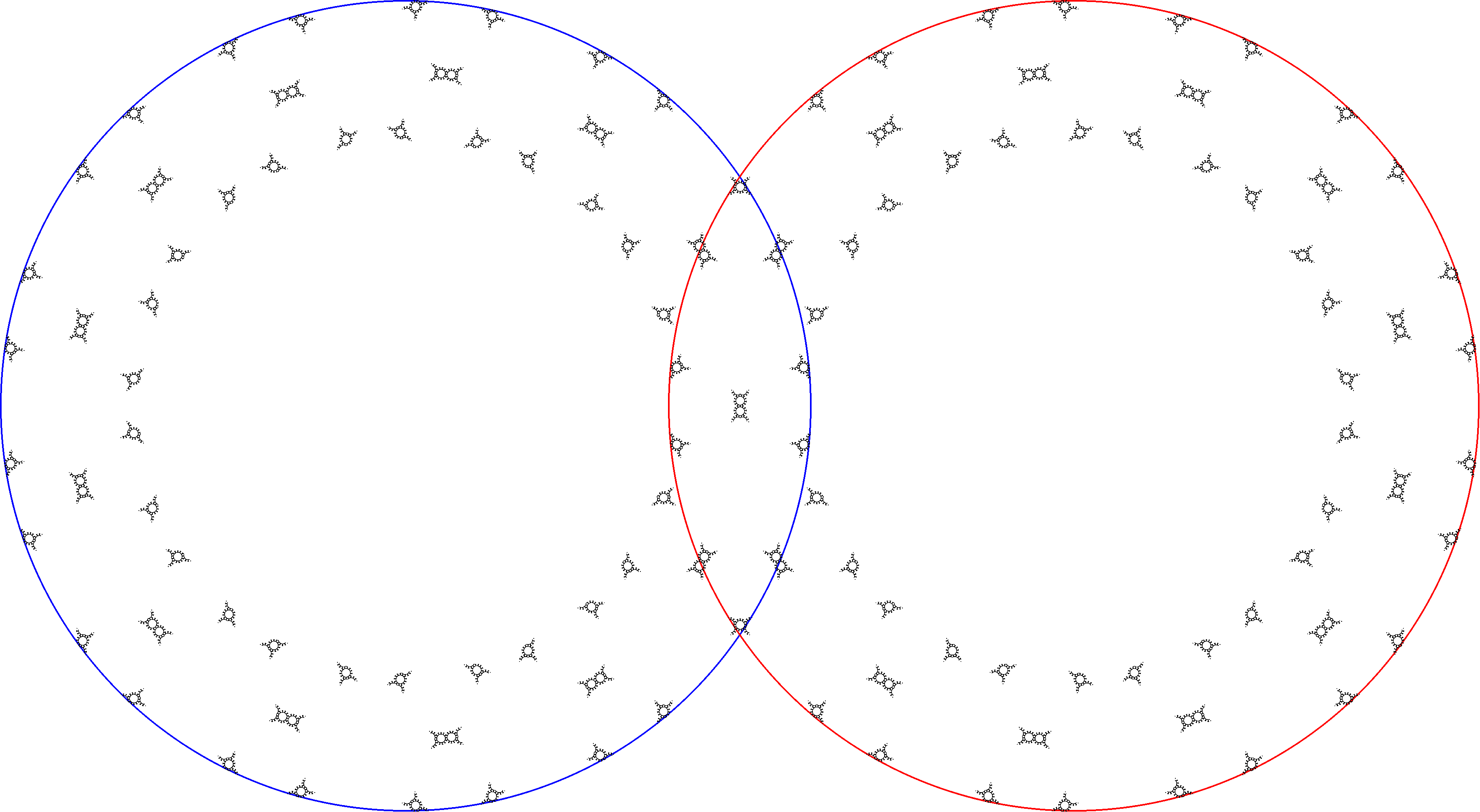}
    \caption{$GG_{13}(1.213594)$.}
\end{figure}

\begin{figure}[h!tbp]
	\centering
	\includegraphics[width=\textwidth]{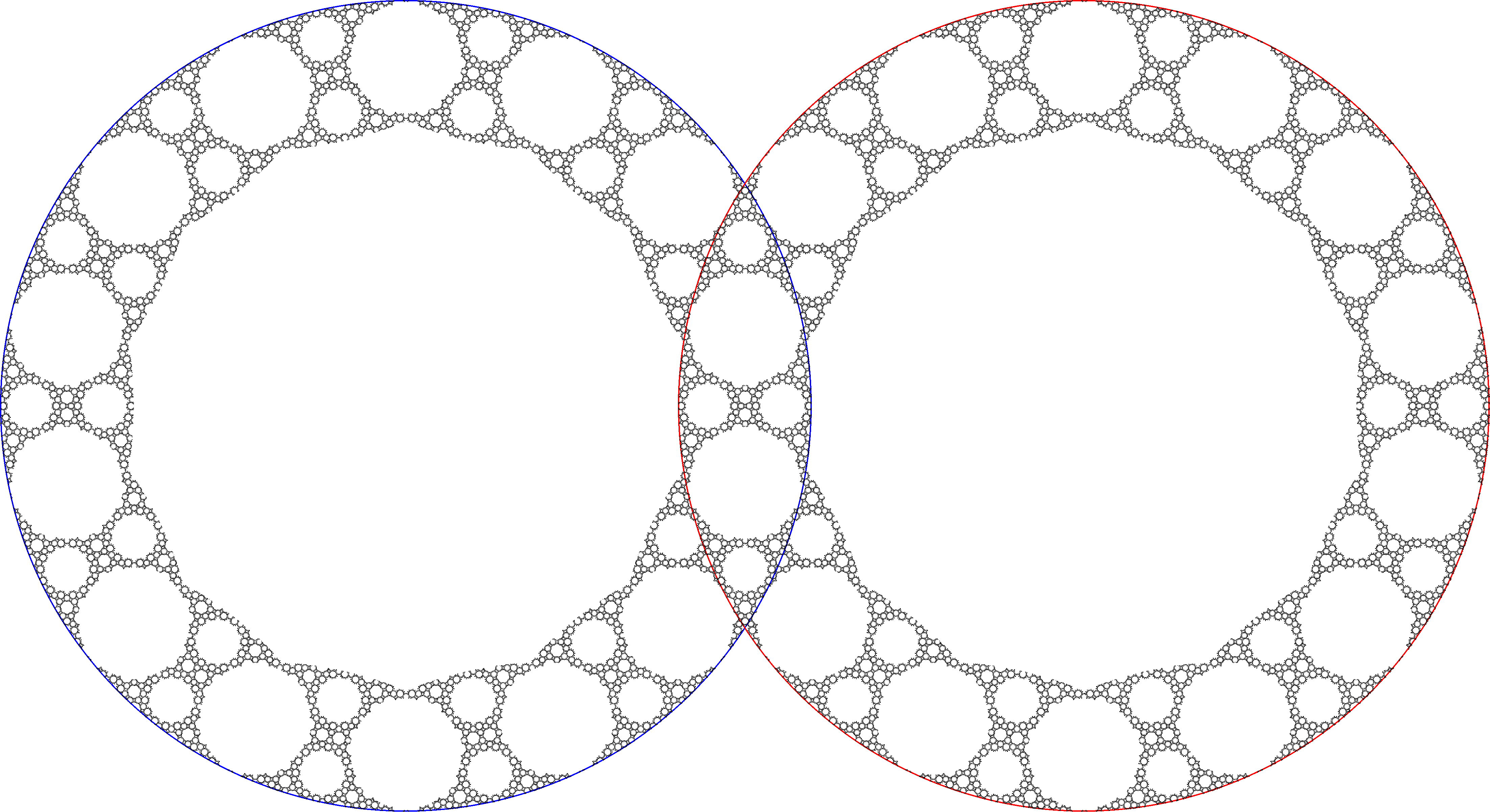}
    \caption{$GG_{14}(1.196554)$.}
\end{figure}

\begin{figure}[h!tbp]
	\centering
	\includegraphics[width=\textwidth]{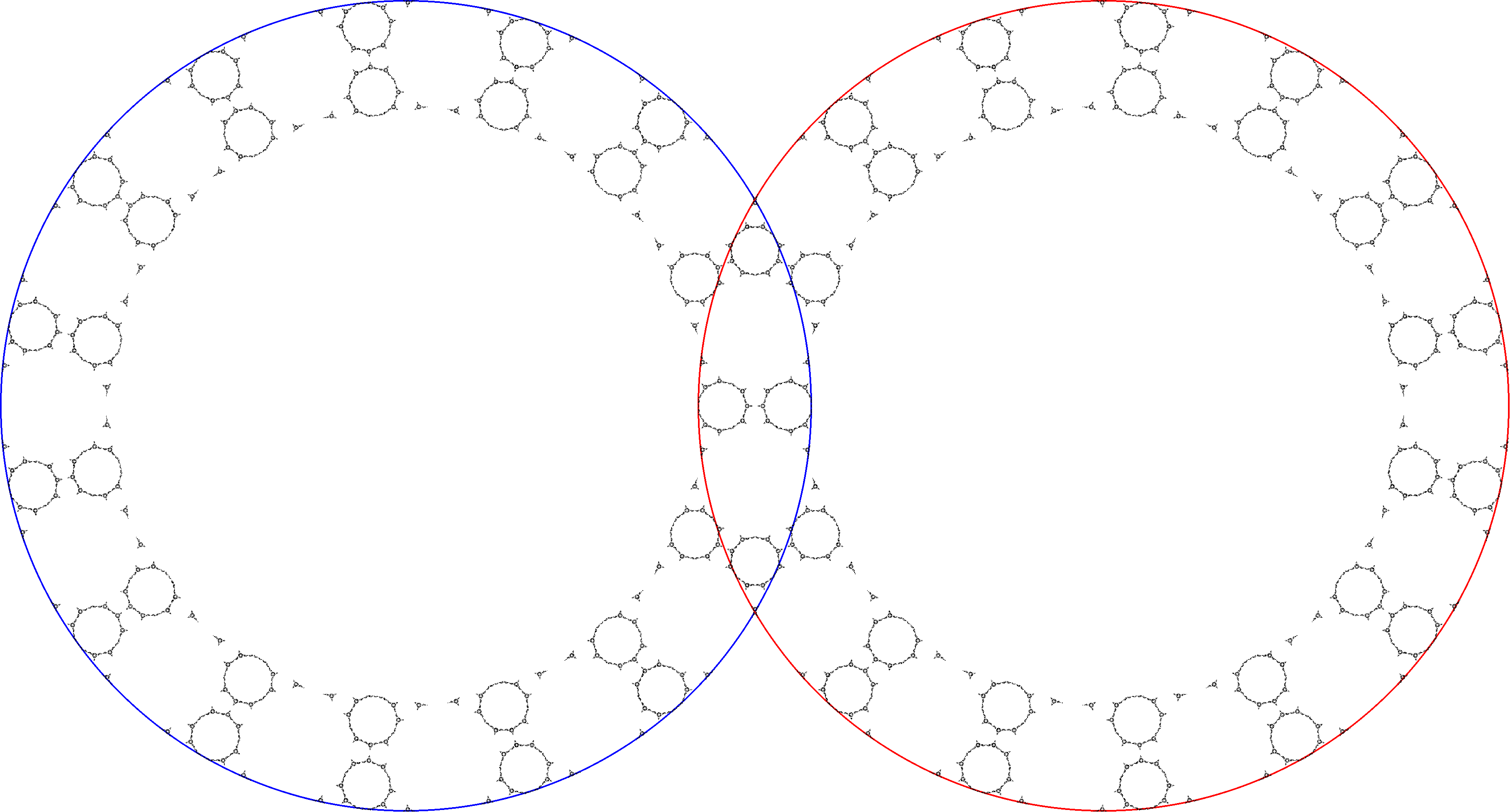}
    \caption{$GG_{15}(1.163275)$.}
\end{figure}

\begin{figure}[h!tbp]
	\centering
	\includegraphics[width=\textwidth]{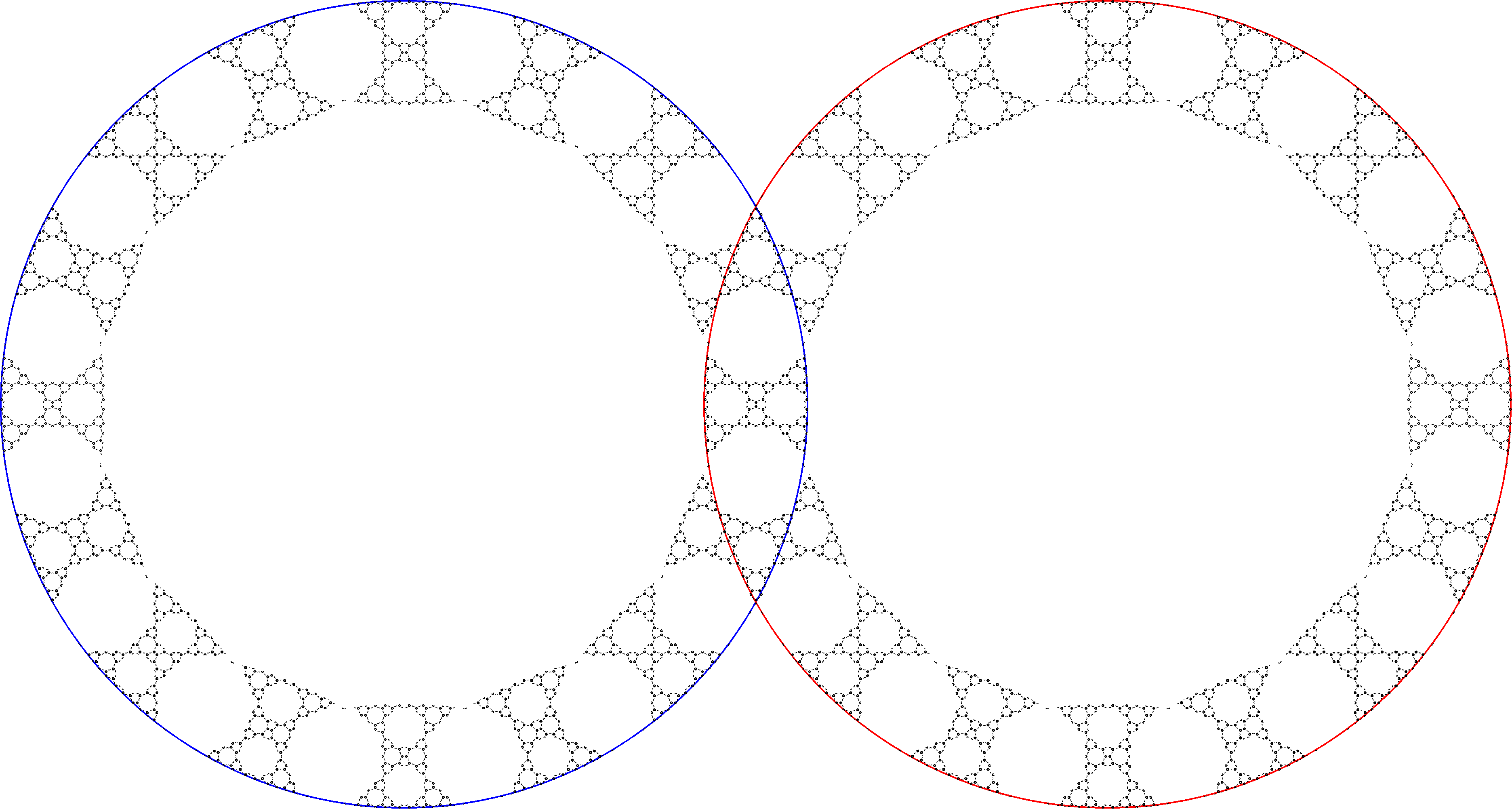}
    \caption{$GG_{16}(1.1484703)$.}
\end{figure}

\begin{figure}[h!tbp]
	\centering
	\includegraphics[width=\textwidth]{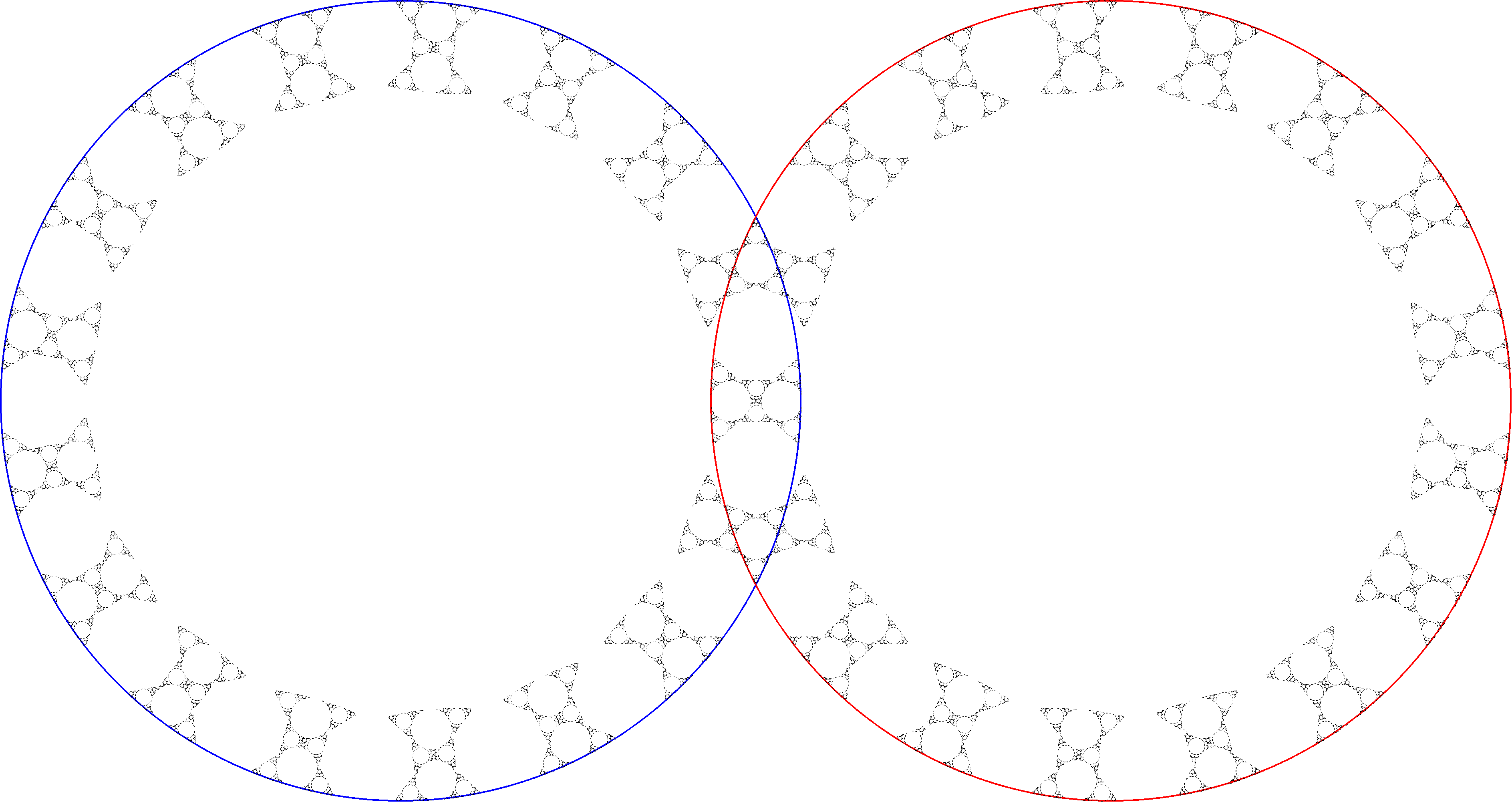}
    \caption{$GG_{17}(1.127509)$.}
\end{figure}

\begin{figure}[h!tbp]
	\centering
	\includegraphics[width=\textwidth]{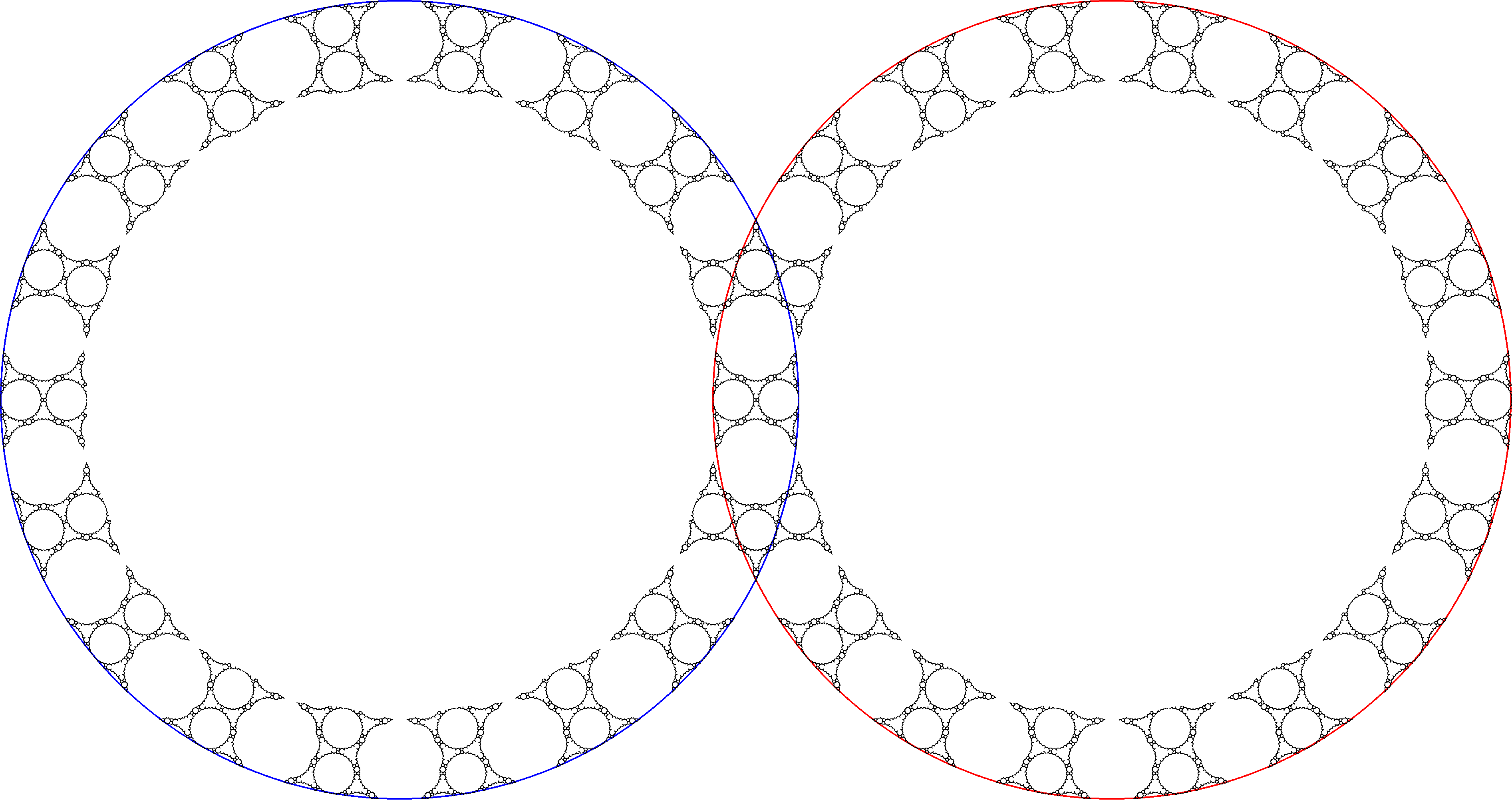}
    \caption{$GG_{18}(1.121504)$.}
\end{figure}

\begin{figure}[h!tbp]
	\centering
	\includegraphics[width=\textwidth]{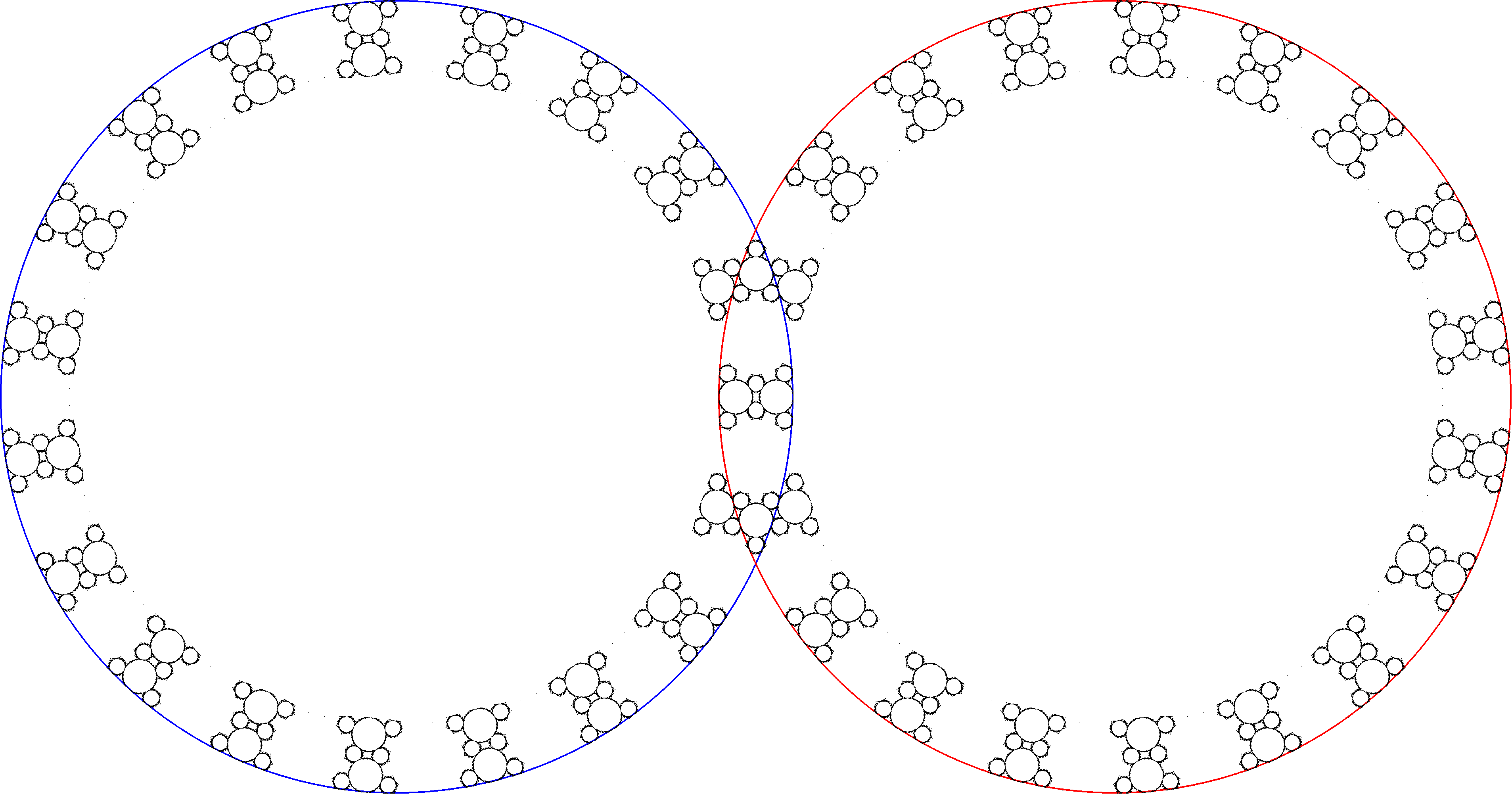}
    \caption{$GG_{19}(1.104246)$.}
\end{figure}

\begin{figure}[h!tbp]
	\centering
	\includegraphics[width=\textwidth]{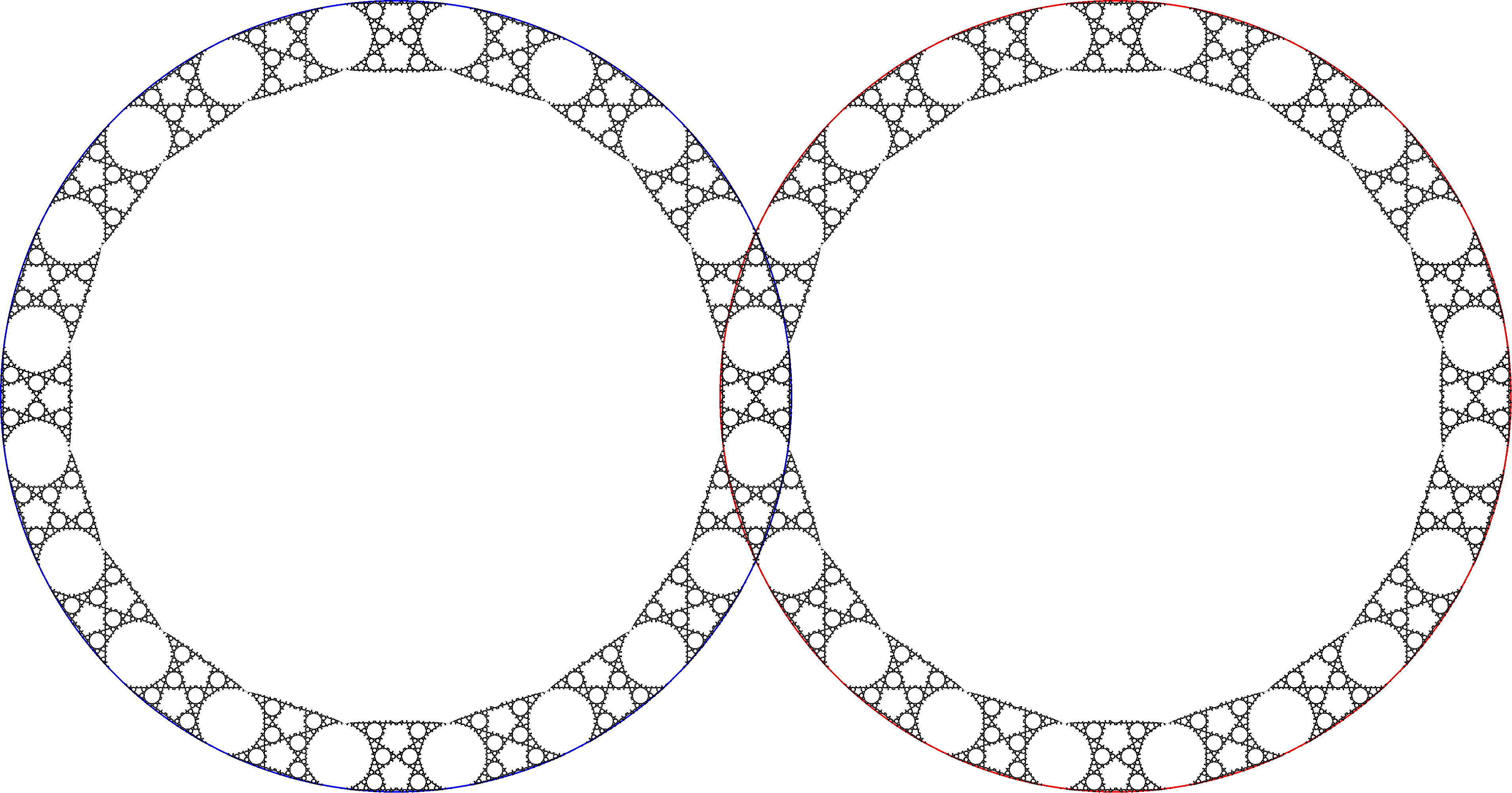}
    \caption{$GG_{20}(1.100581)$.}
\end{figure}

\end{document}